\theoremstyle{definition}
\newtheorem{theorem}{Theorem}[section]
\newtheorem{lemma}[theorem]{Lemma}
\newtheorem{remark}[theorem]{Remark}
\newtheorem{definition}[theorem]{Definition}
\newtheorem{corollary}[theorem]{Corollary}
\newtheorem{proposition}[theorem]{Proposition}
\newcommand{\ZZ}{\mathbb{Z}}
\newcommand{\QQ}{\mathbb{Q}}
\newcommand{\RR}{\mathbb{R}}
\newcommand{\CC}{\mathbb{C}}
\newcommand{\HH}{\mathbb{H}}
\newcommand{\OO}{\mathbb{O}}
\newcommand{\PP}{\mathbb{P}}
\newcommand{\wil}[1]{\widetilde{#1}}
\newcommand{\conj}[1]{\overline{#1}}
\newcommand{\mt}[1]{\text{#1}}
\newcommand{\mf}[1]{\mathfrak{#1}}
\newcommand{\oi}{\textbf{i}}
\newcommand{\oj}{\textbf{j}}
\newcommand{\ok}{\textbf{k}}
\newcommand{\ol}{\textbf{l}}
\newcommand{\oli}{\textbf{li}}
\newcommand{\olj}{\textbf{lj}}
\newcommand{\olk}{\textbf{lk}}
\newcommand{\re}{\text{Re}}
\newcommand{\im}{\text{Im}}
\newcommand{\alt}{\text{Alt}}
\newcommand{\xmin}{X}
\newcommand{\sing}{\Sigma}
\newcommand{\singlong}{\text{Sing($\xmin$)}}
\newcommand{\qdcrpr}{x \times u \times v \times w}
\begin{document}

\title{On the Complex Cayley Grassmannian}
\author{\"Ust\"un Y\i ld\i r\i m}
\address{Department of Mathematics, Michigan State University, MI, 48824}
\email{ustun@mailbox.org}
\date{\today}

\begin{abstract}
  We define a torus action on the (complex) Cayley Grassmannian $X$.
  Using this action, we prove that $X$ is a singular variety.
  We also show that the singular locus is smooth and has the same cohomology ring as that of $\CC\PP^5$.
  Furthermore, we identify the singular locus with a quotient of $G_2^\CC$ by a parabolic subgroup.
\end{abstract}

\date{}
\maketitle

\section{Introduction}
On octonions $\OO\cong\CC^8$, one can define a 4-fold cross product which we denote by \linebreak $(\cdot\times\cdot\times\cdot\times\cdot)\in\Lambda^4\OO^*$ \cite{HL82}.
A 4-plane $\xi= \langle u,v,w,z\rangle$ is called a Cayley plane if it satisfies $\im(u\times v\times w\times z)=0$.
We call the space of all Cayley planes the Cayley Grassmannian denoted by $X$.
It is well known that using octonions over the real numbers with a positive definite metric, the Cayley Grassmannian is isomorphic to the ordinary Grassmannian of 3-planes in 7-space, Gr$(3,7)$.
However, the complex analogue of this space is not well studied.
Using the Pl\"ucker relations and the above description of Cayley planes one can show that $X$ is a 12-dimensional subvariety of the complex 4-planes in 8-space, Gr$(4,\OO)$, see Section~\ref{sec:charts}.

The group Spin$(7,\CC)$ admits an action on $X$ and by restriction to a maximal torus, we get a $(\CC^*)^3$ action on $X$.
It turns out this action has finitely many fixed points, see Theorem~\ref{thm:fixpnts}.
Furthermore, some of the fixed points are singular showing that $X$ is a singular variety, see Theorem~\ref{thm:fxdpnts}.
The singular locus $\Sigma=Sing(X)$ is, however, a smooth projective variety.
Indeed, it can be identified as $G_2^\CC/P_2$, see Theorem~\ref{thm:parabolicquotient}.
Here, $P_2$ is a parabolic subgroup of $G_2^\CC$ corresponding to a long root of $G_2^\CC$.

The structure of this paper mainly follows that of \cite{AC15}.  Whereas we investigate the Cayley Grassmannian, \cite{AC15} investigates associative Grassmannian. The main difficulty in our case is that the Cayley Grassmannian is not a smooth variety.

In Section \ref{sec:oct}, we give an overview of composition algebras. Then, we write down a concrete description of octonions $\OO$ and define various cross product operations using octonionic multiplication.

In Section \ref{sec:cayley}, we define associative and Cayley calibration forms. Then, we express the Cayley calibration and the imaginary part of a four-fold cross product in coordinates.

In Section \ref{sec:charts}, we describe charts of Gr(4,8) when viewed as a subvariety of $\PP(\Lambda^4\CC^8)$. We also give seven linear equations on $\PP(\Lambda^4\CC^8)$ that the Cayley Grassmannian satisfy. Then, we localize these equations to a chart, using Pl\"ucker relations.

In Section \ref{sec:spinsevenandthreesls}, we give a definition of Spin(7,$\CC$), describe three of its subgroups each of which is isomorphic to SL(2,$\CC$) and also describe a maximal torus.

In Section \ref{sec:torusfxd}, we diagonalize the maximal torus and list the eigenvalues and corresponding eigenvectors of the induced action on $\Lambda^4\CC$. Then, we find which eigenvectors lie in $X$. We also prove that the fixed point set of the torus action is finite.

In section \ref{sec:mincompt}, we show that all but six of the torus fixed points are regular, proving $X$ is singular.

In section \ref{sec:singloc}, we prove that the singular locus is smooth and has the same cohomology ring as that of $\CC\PP^5$. Furthermore, we express the singular locus as a quotient of $G_2^{\CC}$ by a parabolic subgroup.

In section \ref{sec:toruslocalsmooth}, we describe the torus action near regular fixed points using a one parameter subgroup of the maximal torus.

\subsection*{\bf Acknowledgement.} I would like to thank Mahir Bilen Can for many inspirational ideas and discussions that not only improved this work but also me as a mathematician. 

\section{Octonions and multiple cross products}\label{sec:oct}
In this section, we recall the definition of octonions and multiple cross products defined using octonionic multiplication. Although octonions are not necessary to define these cross products, they give an alternative way to verify their properties using octonions. Further details can be found in \cite{SV13,HL82,AC15,Bae02,SW10}.

\begin{definition}[\cite{SV13}]
A composition algebra $C$ over a field $k$ is an algebra over $k$ with identity element and a nondegenerate quadratic form $N$ such that
\begin{equation*}
  N(uv)=N(u)N(v)
\end{equation*}
for $u,v\in C$.
The quadratic form $N$ is often referred to as the norm on $C$, and the associated bilinear form $B(\cdot,\cdot)$ is called the inner product.
\end{definition}
In this paper, we take the base field $k$ to be $\CC$.
However, it is not necessary for some parts of the discussion.

A four-dimensional composition algebra is called a quaternion algebra.
Let $\HH$ be a copy of $\CC^4$ generated by $1,\oi, \oj$ and $\ok$ with the relations $\oi^2=\oj^2=\ok^2=\oi\oj\ok= -1$, and $N$ be the quadratic form given by
\begin{equation*}
N(a1+b\oi+c\oj+d\ok)=a^2+b^2+c^2+d^2.
\end{equation*}
Then, $\HH$ is a quaternion algebra over $\CC$.
Following ~\cite{SV13}, we use Cayley-Dickinson doubling to construct an eight-dimensional composition algebra $\OO$ (called octonion algebra) with the new generator $\ol$ whose norm is $1$, i.e., $N(\ol)=1$.
We use the following figure to describe the multiplication table.
\begin{center}
\includegraphics{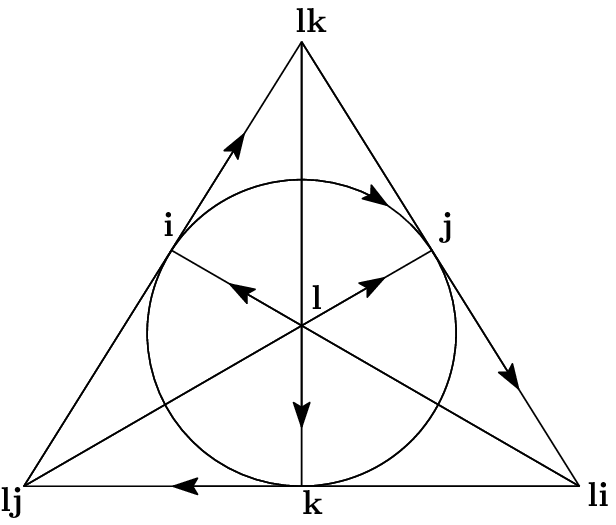}
\end{center}
For each (oriented) line from $x$ to $y$ to $z$, we have the relations
\begin{equation*}
  xy=z, \qquad
  yz=x, \qquad
  zx=y, \qquad \text{and} \qquad
  x^2=y^2=z^2=-1.
\end{equation*}
To ease notation later on, we set $e_0 = 1, e_1 = \oi, e_2 = \oj,$ $e_3 = \ok, e_4=\ol, e_5=\oli, e_6=\olj$ and $e_7=\olk$.
We also set $e^{pqrs}=e^p\wedge e^q\wedge e^r\wedge e^s$ where $\left\{ e^p \right\}$ is the dual basis of $\{e_p\}$.

The octonions are non-associative but they are alternative, i.e., the subalgebra generated by any two elements is associative.
We denote the projection map from $\OO$ to the span of $1$ by $\re$, and 
projection to the orthogonal complement $1^{\perp}$ by $\im$.
This allows us to define an involution $$u\mapsto\conj u = \re(u)-\im(u).$$
The bilinear form $B$ associated to $N$ can be expressed as $B(u,v)=\re(\conj u v)$.
So $N(u)=B(u,u)=\re(\conj u u)$.

A key fact one can verify on the basis elements is
\begin{equation}
  \conj {uv} = \conj v\, \conj u.
  \label{eq:conjofproduct}
\end{equation}
Note that (\ref{eq:conjofproduct}) implies
\begin{equation*}
  \conj{\conj u u} = \conj u u
\end{equation*}
that is $\conj u u\in\re(\OO)$.
Thus,
\begin{equation*}
  N(uv)=\re(\conj{uv} uv ) =\re(\conj v(\conj u u) v) = \re(\conj u u) \re(\conj vv) = N(u)N(v)
\end{equation*}
proving directly that the above multiplication table defines an eight-dimensional composition algebra.
Over $\CC$, there is only one such algebra, namely the octonions.

\begin{lemma}
  For $u,v,v'\in\OO$,
  \begin{equation}
    N(u)B(v,v') = B(uv,uv') = B(vu,v'u).
  \end{equation}
  In particular, for unit $u$, (left or right) multiplication by $u$ is an orthogonal transformation of $\OO$.
  \label{lem:orth}
\end{lemma}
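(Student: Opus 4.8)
The plan is to obtain both identities by \emph{linearizing} (polarizing) the composition law $N(uv) = N(u)N(v)$, and then to read off the orthogonality statement. Recall first that, since $2$ is invertible in $\CC$, the norm and its associated bilinear form satisfy $N(x+y) = N(x) + N(y) + 2B(x,y)$ for all $x,y\in\OO$. To get the first equality I would fix $u$, replace $v$ by $v+v'$ in the composition law, and expand both sides of
\[
  N(u)\,N(v+v') = N\bigl(u(v+v')\bigr) = N(uv+uv').
\]
The left-hand side becomes $N(u)N(v) + N(u)N(v') + 2N(u)B(v,v')$, while the right-hand side, after one more application of the composition law, becomes $N(u)N(v) + N(u)N(v') + 2B(uv,uv')$. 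Cancelling the common terms and dividing by $2$ gives $N(u)B(v,v') = B(uv,uv')$.

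For the second equality I would run the same computation with $u$ on the other side: replace $v$ by $v+v'$ in $N(vu) = N(v)N(u)$ and expand $N\bigl((v+v')u\bigr) = N(vu+v'u)$; the identical bookkeeping yields $N(u)B(v,v') = B(vu,v'u)$.

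Finally, if $N(u)=1$ the first chain of equalities specializes to $B(uv,uv') = B(v,v')$, so left multiplication $L_u : w\mapsto uw$ preserves $B$; since $B$ is nondegenerate, $L_u$ is injective, hence bijective on the finite-dimensional space $\OO$, so $L_u$ is an orthogonal transformation of $\OO$, and the same argument applies to right multiplication $R_u : w \mapsto wu$. (One can also note directly that a unit $u$ is invertible with inverse $\conj u$, which exhibits $L_u^{-1}$ explicitly.) I do not expect any real obstacle here; the only things to watch are the factor of $2$ in the polarization and the fact that over $\CC$ the norm form is isotropic, so one genuinely needs $N(u)=1$ — not merely $u\neq 0$ — to conclude invertibility.
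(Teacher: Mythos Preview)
Your argument is correct and is essentially identical to the paper's own proof: the paper also writes $B(v,v')=\tfrac{1}{2}\bigl(N(v+v')-N(v)-N(v')\bigr)$, multiplies by $N(u)$, and applies the composition law $N(uw)=N(u)N(w)$ termwise to obtain $B(uv,uv')$, with the right-multiplication case handled symmetrically. There is nothing to add.
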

\begin{proof}
  Since $B(v,v')=\frac{1}{2}\left( N(v+v')-N(v)-N(v') \right)$, we have
  \begin{eqnarray*}
    N(u)B(v,v') &=& \frac{1}{2}\left( N(u)N(v+v')-N(u)N(v)-N(u)N(v') \right) \\
    &=& \frac{1}{2}\left( N(uv+uv')-N(uv)-N(uv') \right) \\
    &=& B(uv,uv').
  \end{eqnarray*}
  The second equality can be proved similarly.
\end{proof}

Next, we recall the definition of an $r$-fold cross product.
\begin{definition}
  \label{defn:crsprd}
  Let $(V,B)$ be a vector space with a (non-degenerate) symmetric bilinear form.
  A multilinear map $L:V^r\to V$ is called an $r$-fold cross product if 
  \begin{equation}
  N(L(v_1,\dots,v_n)) = N(v_1\wedge\dots\wedge v_n)
    \label{eq:nfldpro1}
  \end{equation}
with the induced norm on $\Lambda^nV$ and
  \begin{equation}
    B(L(v_1,\dots,v_n),v_i) = 0 \qquad \text{for all } i.
    \label{eq:nfldpro2}
  \end{equation}
\end{definition}
\begin{remark}
  If $L$ is an alternating multilinear map, then it is enough to check (\ref{eq:nfldpro1}) on orthogonal vectors in which case (\ref{eq:nfldpro1}) becomes
  \begin{equation}
    N(L(v_1,\dots,v_n))=N(v_1)\dots N(v_n).
    \label{eq:nfldpro3}
  \end{equation}
  \label{rm:altpro}
\end{remark}
\begin{remark}
  The usual cross product operation on $\RR^3$ is naturally a (two-fold) cross product according to this definition.
\end{remark}

In \cite{BG67}, Brown and Gray proved that an $r$-fold cross product exists on an $n$-dimensional vector space only in the following cases:
\begin{enumerate}
  \item $n$ is even, $r=1$
  \item $n$ is arbitrary, $r=n-1$
  \item $n=3$ or 7, $r=2$
  \item $n=4$ or 8, $r=3$.
\end{enumerate}
Below, we give concrete description of the ``exceptional'' two-fold and three-fold cross products using octonions.
Then, we introduce a ``four-fold cross product'' operation on $\OO$.
Although it is not a cross product according to Definition \ref{defn:crsprd}, it is conventionally called so \cite{HL82,SW10}.
A two-fold cross product (or simply a cross product) can be defined as the restriction of octonionic multiplication to the imaginary part, $\im(\OO)$:
\begin{definition}
  For $u,v\in\im(\OO)$, let 
  \begin{equation}
  u\times v = \im(uv).
  \end{equation}
\end{definition}
\begin{remark}
  By restricting to $u,v\in\im(\HH)$, one gets the two-fold cross product on the three-dimensional space $\im(\HH)$.
\end{remark}

\begin{proposition}
  The map $(u,v)\mapsto u\times v = \im(uv)$ is a two-fold cross product on $\im(\OO)$.
\end{proposition}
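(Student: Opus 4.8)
The plan is to verify directly the two defining conditions of Definition~\ref{defn:crsprd} for the map $L\colon\im(\OO)\times\im(\OO)\to\im(\OO)$, $(u,v)\mapsto u\times v=\im(uv)$, which is manifestly bilinear and, by construction, lands in $\im(\OO)$: the norm condition (\ref{eq:nfldpro1}), where the ambient form on $\Lambda^2\im(\OO)$ is the Gram form $N(u\wedge v)=N(u)N(v)-B(u,v)^2$, and the orthogonality condition (\ref{eq:nfldpro2}). Before touching either, I would isolate two elementary facts. First, $\re(\OO)=\langle 1\rangle$ and $\im(\OO)=1^\perp$ are orthogonal with $N(1)=1$, so the orthogonal decomposition $x=\re(x)\,1+\im(x)$ gives $N(x)=\re(x)^2+N(\im(x))$ for every $x\in\OO$. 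Second, $\conj u=-u$ for $u\in\im(\OO)$, whence $u^2=-\conj u\,u=-N(u)\,1$, and in particular $u\times u=\im(u^2)=0$; thus $L$ is alternating, so by Remark~\ref{rm:altpro} it is enough to check (\ref{eq:nfldpro1}) on orthogonal pairs.

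Next I would dispatch the norm condition. Taking $x=uv$ in the first fact gives $N(uv)=\re(uv)^2+N(u\times v)$, and the composition law $N(uv)=N(u)N(v)$ then yields $N(u\times v)=N(u)N(v)-\re(uv)^2$. It remains to identify $\re(uv)$ with $\pm B(u,v)$: since $B(u,v)=\re(\conj u\,v)$ and $\conj{\conj u}=u$, we get $\re(uv)=B(\conj u,v)=-B(u,v)$ using $\conj u=-u$. Hence $N(u\times v)=N(u)N(v)-B(u,v)^2=N(u\wedge v)$, an identity that in fact holds for arbitrary $u,v$ and a fortiori on orthogonal pairs, where both sides equal $N(u)N(v)$.

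Then I would handle the orthogonality condition. Because $\re(uv)\,1$ lies in $\langle 1\rangle$ while $u,v\in 1^\perp$, the first fact gives $B(u\times v,u)=B(uv,u)$ and $B(u\times v,v)=B(uv,v)$. Applying Lemma~\ref{lem:orth} in the forms $B(uv,u\cdot 1)=N(u)B(v,1)$ and $B(u\cdot v,1\cdot v)=N(v)B(u,1)$, and using $u\cdot 1=u$, $1\cdot v=v$ together with $B(v,1)=\re(v)=0=\re(u)=B(u,1)$ (as $u,v$ have no real part), we conclude $B(uv,u)=B(uv,v)=0$. This completes the check that $L$ is a two-fold cross product on $\im(\OO)$.

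I do not anticipate a genuine obstacle here: the whole argument rests only on the composition identity $N(uv)=N(u)N(v)$, the orthogonal splitting $\OO=\re(\OO)\oplus\im(\OO)$, and Lemma~\ref{lem:orth}. The one point that calls for care is conventional rather than substantive — keeping track of the sign produced by $\conj u=-u$ on $\im(\OO)$ when passing between $\re(uv)$ and $B(u,v)$, and ensuring that the ambient norm used in (\ref{eq:nfldpro1}) is the Gram norm $N(u)N(v)-B(u,v)^2$ on $\Lambda^2\im(\OO)$ rather than some rescaled variant.
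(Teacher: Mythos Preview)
Your proof is correct and follows essentially the same skeleton as the paper's: establish that $L$ is alternating, invoke Remark~\ref{rm:altpro} to reduce (\ref{eq:nfldpro1}) to orthogonal pairs, and derive (\ref{eq:nfldpro2}) from Lemma~\ref{lem:orth} together with $u,v\in 1^\perp$. The one genuine difference is in the norm step. The paper first proves the intermediate fact that $uv\in\im(\OO)$ whenever $u,v\in\im(\OO)$ are orthogonal (via $B(N(u)\,1,uv)=N(u)B(1,v)=0$), so that $N(u\times v)=N(uv)=N(u)N(v)$ directly. You instead use the Pythagorean splitting $N(uv)=\re(uv)^2+N(\im(uv))$ and the identification $\re(uv)=-B(u,v)$ to obtain the unrestricted identity $N(u\times v)=N(u)N(v)-B(u,v)^2$. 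Your route is slightly slicker in that it yields the full Gram formula at once, while the paper's route isolates the useful byproduct that the product of orthogonal imaginaries is imaginary; either way the verification of (\ref{eq:nfldpro2}) is identical to the paper's.
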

\begin{proof}
  Since $\conj u u\in\re(\OO)$, $u\times u = \im(uu)=-\im(\conj uu)=0$.
  So, $u\times v$ is an alternating map.
  By Remark \ref{rm:altpro}, we may assume $u,v\in\im(\OO)$ are orthogonal, that is, $B(u,v)=0$.
  Then, by Lemma \ref{lem:orth} we get
  \begin{eqnarray*}
    0 &=& N(u)B(u,v) \\
    &=& -B(\conj uu, uv) \\
    &=& -B(N(u), uv).
  \end{eqnarray*}
  Thus, $uv\in\im(\OO)$.
  This gives us 
  \begin{eqnarray*}
    N(u\times v) &=& N(\im(uv)) \\
    &=& N(uv) \\
    &=& N(u)N(v).
  \end{eqnarray*}
  To prove (\ref{eq:nfldpro2}), we once again use Lemma \ref{lem:orth}.
  \begin{eqnarray*}
    B(u\times v,u) &=& B(\im(uv),u) \\
    &=& B(uv-\re(uv),u) \\
    &=& B(uv,u)-B(\re(uv),u) \\
    &=& N(u)B(v,1) \\
    &=& 0
  \end{eqnarray*}
  since $u$ and $v$ are orthogonal to $1$.
\end{proof}
\begin{remark}
  There are a number of ways to express the (two-fold) cross product.
  It is easy to show that $\im(uv)=\im(\conj vu)=\alt(\conj vu)$ where $\alt(L)$ is alternation of the multilinear map $L:V^r\to V$. More precisely,
  \begin{equation*}
  \alt(L)(v_1,\dots,v_r) = \frac{1}{r!}\sum_{\sigma\in S_r} \text{sign}(\sigma)L(v_{\sigma(1)},\dots,v_{\sigma(r)})
  \end{equation*}
  where $S_r$ is the symmetric group on the set $\left\{ 1,\dots,r \right\}$.
\end{remark}
This cross product operation is of vital importance in $G_2$ geometry.
Next, following \cite{SW10} we define three-fold and four-fold cross product operations as follows:
\begin{definition}
  For $u,v,w\in\OO$, let
  \begin{equation}
    u\times v \times w = \frac{1}{2}\left( (u\conj v)w - (w\conj v)u \right).
    \label{eq:tcrpr}
  \end{equation}
\end{definition}
\begin{definition}
  For $u,v,w,x\in\OO$, let
\begin{equation}
  x \times u\times v \times w = -\frac{1}{4}\left[(x \times u\times v)\conj w
    - (w \times x\times u)\conj v
    + (v \times w\times x)\conj u
    - (u \times v\times w)\conj x \right].
  \label{eq:qcrpr}
\end{equation}
\end{definition}
\begin{remark}
  The four-fold cross product operation (\ref{eq:qcrpr}) does not satisfy (\ref{eq:nfldpro2}).
  However, it is alternating and satisfies (\ref{eq:nfldpro1}).
  Hence, for orthogonal vectors $x,u,v,$ and $w$ we have
\begin{equation}
  N(x \times u \times v \times w) = N(x)N(u)N(v)N(w).
  \label{eq:qdcrprnorm}
\end{equation}
\end{remark}

\section{Cayley four-planes}\label{sec:cayley}
A $k$-form $\omega\in\Lambda^kV^*$ is called a calibration form if for every orthonormal set of vectors $\left\{ v_1,\dots,v_k \right\}$ we have $|\omega(v_1,\dots,v_n)|\le 1$.
Given a $k$-plane $\xi$ generated by an orthonormal basis $\left\{ v_1,\dots,v_k \right\}$, $\xi$ is called calibrated if $\omega(v_1,\dots,v_k)=\pm 1$.
Using the two-fold (resp. three-fold) cross product, we define a calibration three-form (resp. four-form) $\phi$ (resp. $\Phi$) called associative (resp. Cayley) calibration on $\im(\OO)$ (resp. $\OO$) as follows:
\begin{definition}
  For $u,v,w\in\im(\OO)$, let
  \begin{equation}
    \phi(u,v,w)=B(u, v\times w)
    \label{eq:phi}
  \end{equation}
  and for $x,u,v,w\in\OO$, let
  \begin{equation}
    \Phi(x,u,v,w)=B(x,u\times v\times w).
    \label{eq:Phi}
  \end{equation}
\end{definition}

For a proof of the following proposition see \cite{SW10}.
\begin{proposition} The equations (\ref{eq:phi}) and (\ref{eq:Phi}) define calibration forms and they satisfy
  \begin{equation}
   \phi(u,v,w)=\re(u\times v\times w) 
    \label{eq:phicrpr}
  \end{equation}
  and
  \begin{equation}
    \Phi(x,u,v,w)=\re(x\times u\times v\times w).
    \label{eq:Phicrpr}
  \end{equation}
\end{proposition}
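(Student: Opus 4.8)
The plan is to verify the two claimed identities by relating the bilinear forms on the right of \eqref{eq:phi} and \eqref{eq:Phi} to the real parts of the corresponding cross products, and then to deduce the calibration bound from the norm conditions \eqref{eq:nfldpro3} and \eqref{eq:qdcrprnorm}. I would handle the associative case first, since the four-fold case reduces to it.

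For \eqref{eq:phicrpr}: starting from the definition \eqref{eq:tcrpr}, $u\times v\times w = \tfrac12\bigl((u\conj v)w - (w\conj v)u\bigr)$, I would compute $\re(u\times v\times w)$ using $\re(ab)=\re(ba)=B(\conj a,b)=B(a,\conj b)$ together with the conjugation rule \eqref{eq:conjofproduct}. Since $u,v,w\in\im(\OO)$, we have $\conj u=-u$ etc., so $\re\bigl((u\conj v)w\bigr)=B(w,\conj{u\conj v})=B(w,v\conj u)=-B(w,vu)$, and similarly for the second term. On the other hand $B(u,v\times w)=B(u,\im(vw))=B(u,vw)$ because $u\perp 1$. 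A short manipulation (using $B(u,vw)=B(\conj v u, w)$ from Lemma~\ref{lem:orth}-type identities, or directly expanding) shows these agree, giving \eqref{eq:phicrpr}. Once \eqref{eq:phicrpr} holds, the calibration bound for $\phi$ follows from $|\phi(u,v,w)|=|\re(u\times v\times w)|\le N(u\times v\times w)^{1/2}=1$ on an orthonormal triple, using that $\times$ is alternating (Remark~\ref{rm:altpro}) so \eqref{eq:nfldpro3} applies; alternation of $\phi$ itself also follows from \eqref{eq:phicrpr}.

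For \eqref{eq:Phicrpr}: I would plug the definition \eqref{eq:qcrpr} of the four-fold cross product into $\re(x\times u\times v\times w)$. Each of the four terms has the shape $\re\bigl((a\times b\times c)\conj d\bigr)=B(d, \conj{a\times b\times c})$, and since for imaginary arguments the three-fold cross product is itself imaginary (a consequence of \eqref{eq:phicrpr} and the fact that $\phi$ restricted to $\im(\OO)$ is the ``imaginary part'' pairing) one gets $\conj{a\times b\times c}=-(a\times b\times c)$, hence each term equals $-B(d,a\times b\times c)=-\Phi(d,a,b,c)$ by \eqref{eq:Phi}. Using the alternating property of $\Phi$ to reorder arguments, the four terms collapse: $-\tfrac14(-4)\Phi(x,u,v,w)=\Phi(x,u,v,w)$, which is \eqref{eq:Phicrpr}. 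Then the calibration bound for $\Phi$ follows exactly as before from \eqref{eq:qdcrprnorm} and the fact that $\Phi$ is alternating.

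The main obstacle I anticipate is bookkeeping rather than conceptual: one must be careful that the intermediate three-fold cross products actually land in $\im(\OO)$ when all inputs do, and one must track signs through the conjugation identity \eqref{eq:conjofproduct} and through the permutations of arguments needed to merge the four terms of \eqref{eq:qcrpr}. A clean way to sidestep some of this is to verify both identities on basis elements $e_p$ (finitely many cases, and by multilinearity and alternation only increasing tuples $p<q<r<s$ matter), using the multiplication table; this is routine but tedious, which is presumably why the authors cite \cite{SW10} instead.
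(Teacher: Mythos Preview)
The paper does not actually prove this proposition; it simply refers the reader to \cite{SW10}. Your proposal goes further, and the overall strategy is sound: for \eqref{eq:phicrpr} your manipulation works (the key step, that $B(u,vw)=-B(w,vu)$ for $u,v,w\in\im(\OO)$, follows from $uw+wu=2\re(uw)\in\re(\OO)$ and $v\perp 1$), and the calibration bounds follow as you indicate.

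Your argument for \eqref{eq:Phicrpr}, however, contains two genuine errors. First, the identity you invoke is off by a conjugation: for any $p,d\in\OO$ one has $\re(p\conj d)=\re(\conj d\,p)=B(d,p)$, not $B(d,\conj p)$. Second, and more seriously, the claim that the three-fold cross product of imaginary octonions is itself imaginary is false: by the very identity \eqref{eq:phicrpr} you have just established, $\re(u\times v\times w)=\phi(u,v,w)$, which is not identically zero. (You also restrict to imaginary inputs, whereas $\Phi$ is defined on all of $\OO$.) The good news is that both detours are unnecessary. Using the correct identity $\re(p\conj d)=B(d,p)$, each of the four terms in \eqref{eq:qcrpr} becomes $\pm B(d,a\times b\times c)=\pm\Phi(d,a,b,c)$ directly from \eqref{eq:Phi}, with no imaginariness hypothesis needed; alternation of $\Phi$ (which follows from alternation of the three-fold cross product together with the orthogonality condition \eqref{eq:nfldpro2}, giving $\Phi(x,x,v,w)=0$) then collapses the four terms to $\Phi(x,u,v,w)$ with the correct sign.
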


By (\ref{eq:qdcrprnorm}) and (\ref{eq:Phicrpr}), it is clear that $\Phi(x,u,v,w)=\pm1$ if and only if $\Xi(x,u,v,w):=\im(\qdcrpr)=0$ for orthonormal $x,u,v,$ and $w$.
\begin{definition}
A four-plane $\xi$ generated by $\left\{x,u,v,w\right\}$ is called a Cayley plane if \linebreak $\Xi(x,u,v,w)=0$.
The set of all Cayley planes is called the Cayley Grassmannian and denoted $\xmin$.
We denote the set of all planes $\xi$ generated by orthonormal $\left\{ x,u,v,w \right\}$ satisfying $\Phi(x,u,v,w)=\pm 1$ by $\xmin^0$.
\end{definition}
\begin{remark}
  If a four-plane $\xi$ is generated by orthonormal vectors $\left\{ x,u,v,w \right\}$, then \linebreak $\Xi(x,u,v,w)=0$ is equivalent to $\Phi(x,u,v,w)=\pm 1$.
  Therefore, over $\RR$ with a positive definite metric, there is no distinction between the two conditions and $\xmin^0=\xmin$.
  In fact, they show $\xmin^0=Gr(3,7)$ which is a 12-dimensional variety in \cite{HL82}.
  However, over $\CC$, not every four-plane is generated by an orthonormal basis (with respect to $B$).
  Therefore, we have $\xmin^0\subset\xmin$ in general.
  Furthermore, $\xmin^0$ can be thought of as the set of generic points in $\xmin$.
\end{remark}

It is helpful to express $\phi$, $\Phi$ and $\Xi$ in coordinates.
The associative calibration form is given by
\begin{equation*}
\phi = e^{123}-e^{145}-e^{167}-e^{246}+e^{257}-e^{347}-e^{356}
\end{equation*}
the Cayley calibration form is given by
\begin{eqnarray*}
\Phi&=& e^{0123}-e^{0145}-e^{0167}-e^{0246}+e^{0257}-e^{0347}-e^{0356}
\\ & & -e^{1247}-e^{1256}+e^{1346}-e^{1357}-e^{2345}-e^{2367}+e^{4567}
\end{eqnarray*}
and the imaginary part of the four-fold cross product is given by
\begin{eqnarray}
  \Xi =
  \nonumber
& \left( -e^{0247} -e^{0256} +e^{0346} -e^{0357} +e^{1246} -e^{1257} +e^{1347} +e^{1356}\right)e_{1}\\
  \nonumber
& + \left( +e^{0147} +e^{0156} -e^{0345} -e^{0367} -e^{1245} -e^{1267} +e^{2347} +e^{2356}\right)e_{2}\\
  \nonumber
& + \left( -e^{0146} +e^{0157} +e^{0245} +e^{0267} -e^{1345} -e^{1367} -e^{2346} +e^{2357}\right)e_{3}\\
  \label{eq:xi}
& + \left( -e^{0127} +e^{0136} -e^{0235} +e^{0567} +e^{1234} -e^{1467} +e^{2457} -e^{3456}\right)e_{4}\\
  \nonumber
& + \left( -e^{0126} -e^{0137} +e^{0234} -e^{0467} +e^{1235} -e^{1567} +e^{2456} +e^{3457}\right)e_{5}\\
  \nonumber
& + \left( +e^{0125} -e^{0134} -e^{0237} +e^{0457} +e^{1236} -e^{1456} -e^{2567} +e^{3467}\right)e_{6}\\
  \nonumber
& + \left( +e^{0124} +e^{0135} +e^{0236} -e^{0456} +e^{1237} -e^{1457} -e^{2467} -e^{3567}\right)e_{7}.
  \nonumber
\end{eqnarray}
Using these expressions, one can immediately see that
\begin{equation}
\Phi = e^0\wedge \phi + *\phi
\label{eq:AssocAndCayleyCal}
\end{equation}
where $*$ is the Hodge star operator.

\section{Charts of Gr(4,\texorpdfstring{$\OO$}{\bf O})}\label{sec:charts}
Set $p_{ijkl}=e^{ijkl}$ so they are coordinate functions on $\Lambda^4\OO$.
We also view $p_{ijkl}$ as homogeneous coordinates on $\PP (\Lambda^4\OO)$.
The Grassmannian of four-planes in eight dimensions Gr(4,$\OO$) is the variety given by the Pl\"ucker relations (see \cite{KL72}):
\begin{equation}
  p_{i_1i_2i_3j_1}p_{j_2j_3j_4j_5}= p_{i_1i_2i_3j_2}p_{j_1j_3j_4j_5}-p_{i_1i_2i_3j_3}p_{j_1j_2j_4j_5}+p_{i_1i_2i_3j_4}p_{j_1j_2j_3j_5}-p_{i_1i_2i_3j_5}p_{j_1j_2j_3j_4}
  \label{eq:plucker}
\end{equation}
We consider the intersection of $\mt{Gr}(4,\OO)$ and the zero locus of $\Xi$.
By (\ref{eq:xi}), the zero locus of $\Xi$ is given by these seven linear equations:
\begin{eqnarray}
  \label{eq:plucker1}
  f_1:= -p_{0247} -p_{0256} +p_{0346} -p_{0357} +p_{1246} -p_{1257} +p_{1347} +p_{1356} &=& 0 \\
  f_2:= +p_{0147} +p_{0156} -p_{0345} -p_{0367} -p_{1245} -p_{1267} +p_{2347} +p_{2356} &=& 0 \\
  f_3:= -p_{0146} +p_{0157} +p_{0245} +p_{0267} -p_{1345} -p_{1367} -p_{2346} +p_{2357} &=& 0 \\
  f_4:= -p_{0127} +p_{0136} -p_{0235} +p_{0567} +p_{1234} -p_{1467} +p_{2457} -p_{3456} &=& 0 \\
  f_5:= -p_{0126} -p_{0137} +p_{0234} -p_{0467} +p_{1235} -p_{1567} +p_{2456} +p_{3457} &=& 0 \\
  f_6:= +p_{0125} -p_{0134} -p_{0237} +p_{0457} +p_{1236} -p_{1456} -p_{2567} +p_{3467} &=& 0 \\
  f_7:= +p_{0124} +p_{0135} +p_{0236} -p_{0456} +p_{1237} -p_{1457} -p_{2467} -p_{3567} &=& 0 .
  \label{eq:plucker7}
\end{eqnarray}
This intersection is the Cayley Grassmannian.

Once we choose a chart $U_{stun}=\left\{ x\in\PP(\Lambda^4\OO) \;|\; p_{stun}(x)\neq 0 \right\}$, we use the following notation for local coordinates (suppressing the indices $s,t,u,n$).
\begin{equation*}
  q_{ijkl} = \frac{p_{ijkl}}{p_{stun}}.
\end{equation*}
For example, over $U_{0123}$, using Pl\"ucker relations (\ref{eq:plucker}), we have
\begin{equation}
  \frac{p_{4567}}{p_{0123}} = \frac{p_{0456}}{p_{0123}}\frac{p_{1237}}{p_{0123}} - \frac{p_{1456}}{p_{0123}}\frac{p_{0237}}{p_{0123}} + \frac{p_{2456}}{p_{0123}}\frac{p_{0137}}{p_{0123}}-\frac{p_{3456}}{p_{0123}}\frac{p_{0127}}{p_{0123}}
  \label{eq:pluckerexample}
\end{equation}
or, more concisely,
\begin{equation}
  q_{4567} = q_{0456}q_{1237}-q_{1456}q_{0237}+q_{2456}q_{0137}-q_{3456}q_{0127}.
  \label{eq:pluckerexampleq}
\end{equation}

Fix a chart $U_{stun}$.
Then, one can show that any coordinate function can be expressed only in terms of the variables $q_{ijkl}$ with $|\{i,j,k,l\}\cap\{s,t,u,n\}| = 3$ by using (\ref{eq:plucker}) (repeatedly if necessary).
There are exactly 16 such variables corresponding to the fact that $\dim(\text{Gr}(4,\OO))=16$ and they give us the charts of Gr(4,$\OO$), see \cite{KL72,AC15}.

For example, on $U_{0123}$, the local (Grassmannian) variables are
$$\left\{ q_{0124},q_{0125},q_{0126},q_{0127},q_{0134},q_{0135},q_{0136},q_{0137},q_{0234},q_{0235},q_{0236},q_{0237},q_{1234},q_{1235},q_{1236},q_{1237} \right\}.$$
We localize the defining equations (\ref{eq:plucker1})-(\ref{eq:plucker7}) of Cayley planes to these coordinates.
\begin{eqnarray*}
f_1 &=& - q_{0124} q_{0237} + q_{0124} q_{1236} - q_{0125} q_{0236} - q_{0125} q_{1237} + q_{0126} q_{0235} - q_{0126} q_{1234} + q_{0127} q_{0234} \\
    & & + q_{0127} q_{1235} + q_{0134} q_{0236} + q_{0134} q_{1237} - q_{0135} q_{0237} + q_{0135} q_{1236} - q_{0136} q_{0234} - q_{0136} q_{1235} \\
    & & + q_{0137} q_{0235} - q_{0137} q_{1234} \\
f_2 &=& q_{0124} q_{0137} - q_{0124} q_{1235} + q_{0125} q_{0136} + q_{0125} q_{1234} - q_{0126} q_{0135} - q_{0126} q_{1237} - q_{0127} q_{0134} \\
    & & + q_{0127} q_{1236} - q_{0134} q_{0235} + q_{0135} q_{0234} - q_{0136} q_{0237} + q_{0137} q_{0236} + q_{0234} q_{1237} + q_{0235} q_{1236} \\
    & & - q_{0236} q_{1235} - q_{0237} q_{1234}\\
f_3 &=& - q_{0124} q_{0136} + q_{0124} q_{0235} + q_{0125} q_{0137} - q_{0125} q_{0234} + q_{0126} q_{0134} + q_{0126} q_{0237} - q_{0127} q_{0135} \\
    & & - q_{0127} q_{0236} - q_{0134} q_{1235} + q_{0135} q_{1234} - q_{0136} q_{1237} + q_{0137} q_{1236} - q_{0234} q_{1236} + q_{0235} q_{1237} \\
    & & + q_{0236} q_{1234} - q_{0237} q_{1235}
\end{eqnarray*}
\begin{eqnarray*}
f_4 &=& - q_{0124} q_{0136} q_{1237} + q_{0124} q_{0137} q_{1236} + q_{0124} q_{0235} q_{1237} - q_{0124} q_{0237} q_{1235} \\
    & & + q_{0125} q_{0136} q_{0237} - q_{0125} q_{0137} q_{0236} - q_{0125} q_{0234} q_{1237} + q_{0125} q_{0237} q_{1234} \\
    & & + q_{0126} q_{0134} q_{1237} - q_{0126} q_{0135} q_{0237} + q_{0126} q_{0137} q_{0235} - q_{0126} q_{0137} q_{1234} \\
    & & - q_{0127} - q_{0127} q_{0134} q_{1236} + q_{0127} q_{0135} q_{0236} - q_{0127} q_{0136} q_{0235} + q_{0127} q_{0136} q_{1234}\\
    & & + q_{0127} q_{0234} q_{1235} - q_{0127} q_{0235} q_{1234} - q_{0134} q_{0235} q_{1236} + q_{0134} q_{0236} q_{1235} \\
    & & + q_{0135} q_{0234} q_{1236} - q_{0135} q_{0236} q_{1234} + q_{0136} - q_{0136} q_{0234} q_{1235} + q_{0136} q_{0235} q_{1234} - q_{0235} + q_{1234} \\
f_5 &=& - q_{0124} q_{0136} q_{0237} + q_{0124} q_{0137} q_{0236} + q_{0124} q_{0235} q_{1236} - q_{0124} q_{0236} q_{1235} \\
    & &  - q_{0125} q_{0136} q_{1237} + q_{0125} q_{0137} q_{1236} - q_{0125} q_{0234} q_{1236} + q_{0125} q_{0236} q_{1234} \\
    & & - q_{0126} + q_{0126} q_{0134} q_{0237} + q_{0126} q_{0135} q_{1237} - q_{0126} q_{0137} q_{0234} - q_{0126} q_{0137} q_{1235} \\
    & & + q_{0126} q_{0234} q_{1235} - q_{0126} q_{0235} q_{1234} - q_{0127} q_{0134} q_{0236} - q_{0127} q_{0135} q_{1236} \\
    & & + q_{0127} q_{0136} q_{0234} + q_{0127} q_{0136} q_{1235} + q_{0134} q_{0235} q_{1237} - q_{0134} q_{0237} q_{1235} \\
    & & - q_{0135} q_{0234} q_{1237} + q_{0135} q_{0237} q_{1234} - q_{0137} + q_{0137} q_{0234} q_{1235} - q_{0137} q_{0235} q_{1234} + q_{0234} + q_{1235} \\
f_6 &=& q_{0124} q_{0135} q_{0237} - q_{0124} q_{0135} q_{1236} + q_{0124} q_{0136} q_{1235} - q_{0124} q_{0137} q_{0235} \\
    & & + q_{0125} - q_{0125} q_{0134} q_{0237} + q_{0125} q_{0134} q_{1236} - q_{0125} q_{0136} q_{1234} + q_{0125} q_{0137} q_{0234} \\
    & & - q_{0125} q_{0236} q_{1237} + q_{0125} q_{0237} q_{1236} - q_{0126} q_{0134} q_{1235} + q_{0126} q_{0135} q_{1234} \\
    & & + q_{0126} q_{0235} q_{1237} - q_{0126} q_{0237} q_{1235} + q_{0127} q_{0134} q_{0235} - q_{0127} q_{0135} q_{0234} \\
    & & - q_{0127} q_{0235} q_{1236} + q_{0127} q_{0236} q_{1235} - q_{0134} + q_{0134} q_{0236} q_{1237} - q_{0134} q_{0237} q_{1236} \\
    & & - q_{0136} q_{0234} q_{1237} + q_{0136} q_{0237} q_{1234} + q_{0137} q_{0234} q_{1236} - q_{0137} q_{0236} q_{1234} - q_{0237} + q_{1236} \\
f_7 &=& q_{0124} - q_{0124} q_{0135} q_{0236} - q_{0124} q_{0135} q_{1237} + q_{0124} q_{0136} q_{0235} + q_{0124} q_{0137} q_{1235} \\
    & & - q_{0124} q_{0236} q_{1237} + q_{0124} q_{0237} q_{1236} + q_{0125} q_{0134} q_{0236} + q_{0125} q_{0134} q_{1237} \\
    & & - q_{0125} q_{0136} q_{0234} - q_{0125} q_{0137} q_{1234} - q_{0126} q_{0134} q_{0235} + q_{0126} q_{0135} q_{0234} \\ 
    & & + q_{0126} q_{0234} q_{1237} - q_{0126} q_{0237} q_{1234} - q_{0127} q_{0134} q_{1235} + q_{0127} q_{0135} q_{1234} \\
    & & - q_{0127} q_{0234} q_{1236} + q_{0127} q_{0236} q_{1234} + q_{0135} - q_{0135} q_{0236} q_{1237} + q_{0135} q_{0237} q_{1236} \\
    & & + q_{0136} q_{0235} q_{1237} - q_{0136} q_{0237} q_{1235} - q_{0137} q_{0235} q_{1236} + q_{0137} q_{0236} q_{1235} + q_{0236} + q_{1237}
\end{eqnarray*}
\section{Spin(7,\texorpdfstring{$\CC$}{\bf C}) and Three SL(2,\texorpdfstring{$\CC$}{\bf C}) actions}\label{sec:spinsevenandthreesls}
In this section, we first give an unusual definition of the group Spin(7,$\CC$) and $G_2^{\CC}$ following Bryant \cite{Bry87}.
It is well known that Spin(7,$\CC$) is of rank three, that is, its maximal tori are three-dimensional.
Then, we describe a maximal torus of Spin(7,$\CC$) and three subgroups of Spin(7,$\CC$) each of which is isomorphic to SL(2,$\CC$).

\begin{definition}
  Spin$(7,\CC)$ is the identity component of $\{M\in \mt{SO}(8,\CC)\ |\ M^* \Phi = \Phi\}$ and $G_2^\CC = \left\{ M\in \mt{SO}(7,\CC) \ |\  M^*\phi = \phi \right\}$.
\end{definition}
Given $A\in G_2^\CC$, we can extend it linearly so that it fixes $1\in \OO$.
Then, it is easy to see that $G_2^\CC$ is a subgroup of $\mt{Spin}(7,\CC)$ using (\ref{eq:AssocAndCayleyCal}).

By definition, an element of Spin(7,$\CC$) acts on $\OO$ preserving orthonormality and the values of $\Phi$.
Thus, it takes a Cayley plane to a Cayley plane.
In other words, it defines an action on the Cayley Grassmannian.

 Consider the following matrix
\begin{eqnarray*}
  L_\lambda = \left(
  \begin{matrix}
    P_\lambda & -iM_\lambda \\
    iM_\lambda & P_\lambda
  \end{matrix}
  \right)
\end{eqnarray*}
where $P_\lambda=\frac{\lambda+\lambda^{-1}}{2}, M_\lambda=\frac{\lambda-\lambda^{-1}}{2},$ and $\lambda\in\CC^*$.
Note that its determinant is $1$.
In fact, it has eigenvalues $\lambda$ and $\lambda^{-1}$ with eigenvectors
\[
  \begin{pmatrix}
    1 \\
    i
  \end{pmatrix}
  \quad \mt{ and }
  \quad 
  \begin{pmatrix}
    1 \\
    -i
  \end{pmatrix}
\]
respectively.

Considering $L_\lambda$ as a block matrix, we define the following $8\times8$ matrices $A_\lambda = L_\lambda \oplus L_{\lambda}\oplus L_\lambda \oplus L_{\lambda},$ $B_\mu = L_\mu\oplus L_{\mu^{-1}} \oplus I_4$ and $C_\gamma = I_4\oplus L_\gamma\oplus L_{\gamma^{-1}} $ and view them as transformations of $\OO$ with respect to the standard basis $\left\{ e_i \right\}$ where $I_n$ is the $n\times n$ identity matrix.

\begin{lemma}
 The image of $h:(\CC^*)^3\to $SL(8,$\CC$) defined by
\begin{equation*}
  h(\lambda,\mu,\gamma) = A_\lambda B_\mu C_\gamma
\end{equation*}
is a maximal torus $T$ of Spin(7,$\CC$).
  \label{lm:maxtor}
\end{lemma}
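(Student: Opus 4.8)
The plan is to verify directly that each generator $A_\lambda$, $B_\mu$, $C_\gamma$ lies in $\mt{Spin}(7,\CC)$, then show that the image $h((\CC^*)^3)$ is a three-dimensional torus, and finally invoke the rank of $\mt{Spin}(7,\CC)$ to conclude maximality. First I would check membership in $\mt{SO}(8,\CC)$: each $L_\nu$ has determinant $1$ and satisfies $L_\nu^\top L_\nu = I_2$ (a quick $2\times 2$ computation using $P_\nu^2 - M_\nu^2 = 1$ and the symmetry/antisymmetry of the blocks), so any block sum of $L_\nu$'s is in $\mt{SO}(8,\CC)$; hence so is the product $h(\lambda,\mu,\gamma)$. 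Next, and this is the substantive point, I would verify $h(\lambda,\mu,\gamma)^*\Phi = \Phi$. Rather than expand $\Phi$ against the full matrix, I would diagonalize: $A_\lambda$, $B_\mu$, $C_\gamma$ commute (they are built from block-diagonal pieces acting on overlapping pairs of coordinates, but one checks the overlaps commute because $L$'s on disjoint index sets commute and $A_\lambda$ is scalar-like across all blocks), so they are simultaneously diagonalizable. Using the eigenvectors $(1,i)^\top$, $(1,-i)^\top$ of $L_\nu$, pass to the basis $f_p^{\pm}$ diagonalizing all three; in that basis $\Phi$ becomes a sum of monomials $f^{p_1}\wedge f^{p_2}\wedge f^{p_3}\wedge f^{p_4}$, and $h$ acts on each by a Laurent monomial in $\lambda,\mu,\gamma$. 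The claim $h^*\Phi=\Phi$ is then equivalent to the purely combinatorial statement that every monomial occurring in $\Phi$ (in the new basis) has total weight zero under all three $\CC^*$ factors. I expect this weight-zero check to be the main obstacle: it requires writing $\Phi$ (given in the excerpt in the $e^{ijkl}$ basis) explicitly in the eigenbasis and tracking the $14$ terms, but it is a finite verification that can be organized by the symmetry of the multiplication table.

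Having established $h((\CC^*)^3)\subseteq\mt{Spin}(7,\CC)$, I would then argue the image is a torus of dimension exactly $3$. It is the continuous image of $(\CC^*)^3$ under an algebraic group homomorphism, hence a subtorus; to see the dimension is $3$ (i.e. $h$ is injective, or at least has finite kernel), I would read off the eigenvalues of $h(\lambda,\mu,\gamma)$ on $\OO$ in the diagonalizing basis — they are $\{\lambda^{\pm1}\mu^{\pm1},\ \lambda^{\pm1}\mu^{\mp1},\ \lambda^{\pm1}\gamma^{\pm1},\ \lambda^{\pm1}\gamma^{\mp1}\}$ from the block structure of $A_\lambda B_\mu C_\gamma$ — and observe that knowing all these eigenvalues determines $\lambda,\mu,\gamma$ up to finite ambiguity, so $\dim h((\CC^*)^3)=3$. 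Finally, since $\mt{Spin}(7,\CC)$ has rank $3$, any $3$-dimensional subtorus is maximal; therefore $T:=h((\CC^*)^3)$ is a maximal torus, which is the assertion. If one wants $h$ literally injective onto $T$ rather than an isogeny, a slightly more careful bookkeeping of the eigenvalue exponents pins down $\lambda,\mu,\gamma$ exactly, but this refinement is not needed for the statement as phrased.
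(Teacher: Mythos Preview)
Your proposal is correct and follows essentially the same route as the paper: check $L_\nu\in\mt{SO}(2,\CC)$ to get $h(\lambda,\mu,\gamma)\in\mt{SO}(8,\CC)$, verify $h^*\Phi=\Phi$ by a direct finite computation, note the kernel is finite so the image is a three-dimensional torus, and conclude maximality from $\mt{rank}\,\mt{Spin}(7,\CC)=3$. The only cosmetic difference is that you propose to organize the $\Phi$-invariance check as a weight-zero verification in the simultaneous eigenbasis, whereas the paper simply reports a software computation; the paper in fact introduces exactly this eigenbasis immediately afterwards, so your formulation anticipates that. Two small clarifications: the three matrices commute because all of $A_\lambda,B_\mu,C_\gamma$ are block-diagonal on the \emph{same} $2\times2$ blocks and the $L_\nu$ form an abelian group ($L_\nu L_\rho=L_{\nu\rho}$), not because they act on disjoint index sets; and $h$ is genuinely not injective (kernel $\{\pm(1,1,1)\}$), so your closing remark about pinning down $\lambda,\mu,\gamma$ exactly should be dropped.
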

\begin{proof}
  It is easy to prove that $L_\lambda L_\mu = L_{\lambda\mu}$ and if $L_\lambda = I_2$ then $\lambda=1$.
  It follows that $A_\lambda,B_\mu$ and $C_\gamma$ commute with each other.
  Hence, $h$ is a well defined homomorphism.
  Furthermore, the kernel of $h$ is given by $\left\{ \pm(1,1,1) \right\}$ and thus, the image $T \cong (\CC^*)^3/\ZZ_2 $ is isomorphic to $(\CC^*)^3$.
  Since the rank of Spin(7,$\CC$) is $3$, we only need to show that $T\subset$ Spin(7,$\CC$).
  
  A simple computation shows that $(L_\lambda)^{-1}=L_{\lambda^{-1}}=L_\lambda^T$.
  In other words, $L_\lambda\in$ SO$(2,\CC)$ which implies $T\subset$ SO$(8,\CC)$.
  Finally, we need to show that for $M\in T$, $M^*\Phi=\Phi$.
  We verify this by a direct computation with the help of a software.
\end{proof}

From our discussion above for $L_\lambda$, it is easy to find eigenvalues and eigenvectors for $h(\lambda,\mu,\gamma)$.
They are given in the following table.
\begin{center}
  \begin{equation}
    \label{tbl:eiva}
    \begin{tabular}[]{c|c}
      eigenvalue                 &  eigenvector       \\  \hline
      $\lambda\mu$               &  $1+i\oi$          \\  \hline
      $\lambda^{-1}\mu^{-1}$     &  $1-i\oi$          \\  \hline
      $\lambda\mu^{-1}$          &  $\oj+i\ok$        \\  \hline
      $\lambda^{-1}\mu$          &  $\oj-i\ok$        \\  \hline
      $\lambda\gamma$            &  $\ol+i\oli$     \\  \hline
      $\lambda^{-1}\gamma^{-1}$  &  $\ol-i\oli$     \\  \hline
      $\lambda\gamma^{-1}$       &  $\olj+i\olk$  \\  \hline
      $\lambda^{-1}\gamma$       &  $\olj-i\olk$
    \end{tabular}
  \end{equation}
\end{center}

We identify $\mt{SL}(2,\CC)$ as the subgroup of the multiplicative group of $\HH$ with $N=1$.
More explicitly, $u\in\HH$ is identified with the matrix $A_u$ given by
\[
  u=a1+b\oi+c\oj+d\ok \mapsto
  A_u = 
  \begin{pmatrix}
    a - id & -b + ic\\
    b + ic & a + id
  \end{pmatrix}.
\]
Note that $A_u:\HH\to \CC^{2\times2}$ is a linear isomorphism and it satisfies
\[
  A_uA_v = A_{uv}.
\]
Moreover, $N(u) = \det(A_u)$.
Hence, $(\CC^{2\times 2}, \det)$ is a quaternion algebra isomorphic to $(\HH,N)$ via $(u\mapsto A_u)$.
Thus, $\mt{SL}(2,\CC)$ can be identified with the unit sphere of $\HH$, i.e., $\left\{ v\in \HH\;|\; N(v)=1 \right\}$.

\begin{proposition}
There are three SL(2,$\CC$) actions on $\OO$ which preserve $B$ and $\Phi$.
To describe these actions we express $\OO$ as a direct sum $\OO = \HH \oplus \ol\HH $.
Let $v=(x,y)\in\OO$.
\begin{enumerate}
  \item $g\cdot v = (xg^{-1},y)$
    \label{actone}
  \item $g\cdot v = (x,yg^{-1})$
  \item $g\cdot v = (gx,gy)$
\end{enumerate}
\end{proposition}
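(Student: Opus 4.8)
The plan is to verify the three claimed properties — linearity, preservation of the bilinear form $B$, and preservation of the Cayley calibration $\Phi$ — for each of the three $\mathrm{SL}(2,\CC)$ actions, exploiting the quaternionic model established just above. Write $\OO=\HH\oplus\ol\HH$ and recall that for $v=(x,y)$ one has $v=x+\ol y$ with octonionic multiplication governed by the Cayley--Dickson formulas; in particular conjugation is $\conj v=(\conj x,-y)$ and $N(v)=N(x)+N(y)$. For each action $g\cdot v$ with $N(g)=1$, linearity in $v$ is immediate since quaternionic multiplication is $\CC$-bilinear and the action is defined componentwise by left/right multiplication. Preservation of $B$ reduces, by polarization, to preservation of $N$: for action~\eqref{actone}, $N(xg^{-1})+N(y)=N(x)N(g^{-1})+N(y)=N(x)+N(y)$ since $N(g)=1$, and identically for the other two using $N(gx)=N(g)N(x)$. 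So the only real content is $M^*\Phi=\Phi$.

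For the $\Phi$-invariance I would avoid brute-force expansion of the $14$-term coordinate formula and instead argue structurally. By \eqref{eq:Phicrpr}, $\Phi(v_1,v_2,v_3,v_4)=\re(v_1\times v_2\times v_3\times v_4)$, and the four-fold cross product \eqref{eq:qcrpr} is built entirely from the three-fold cross product \eqref{eq:tcrpr}, which in turn is built from octonionic multiplication and conjugation. Hence it suffices to show that each of the three maps is an algebra automorphism-like symmetry in the precise sense that it commutes with the operations appearing in \eqref{eq:tcrpr} and \eqref{eq:qcrpr} up to the symmetrization that defines $\Phi$. Concretely, for action~(3), $v\mapsto(gx,gy)$ is exactly left octonionic multiplication by the unit quaternion $g$ (viewed inside $\OO$); left multiplication by a unit octonion is orthogonal (Lemma~\ref{lem:orth}), and one checks on the generators of the relevant subalgebra that it preserves the associative form $\phi$ on $\im\HH$, hence preserves $\phi$ on $\im\OO$ and therefore $\Phi$ via \eqref{eq:AssocAndCayleyCal}; in fact actions (1)--(3) generate a copy of $\mathrm{Spin}(4,\CC)\subset\mathrm{Spin}(7,\CC)$ stabilizing $\HH$, and membership in $\mathrm{Spin}(7,\CC)$ is exactly the condition $M^*\Phi=\Phi$. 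The cleanest route is: (a) show each action lies in $\mathrm{SO}(8,\CC)$ (done via $N$ above); (b) show each action fixes $1\in\OO$ for actions (1),(2) and, for action (3), note $g\cdot 1=g$; (c) compute $M^*\Phi$ directly on the finitely many basis four-vectors $e^{pqrs}$, as was done for the maximal torus in Lemma~\ref{lm:maxtor}, and observe invariance.

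The main obstacle is step (c): verifying $M^*\Phi=\Phi$ is not conceptually deep but is the part that genuinely requires either a careful bookkeeping of how each $\mathrm{SL}(2,\CC)$ acts on the basis $\{e_0,\dots,e_7\}$ (the matrices $A_u$ act on $\HH=\langle e_0,e_1,e_2,e_3\rangle$ and, through the Cayley--Dickson construction, in a twisted way on $\ol\HH=\langle e_4,e_5,e_6,e_7\rangle$), or a software-assisted check as the authors used for the torus. I expect the write-up to record the explicit $8\times 8$ matrix of each generator $g$ and then invoke the same computational verification; the conceptual point to emphasize is that closure of these three one-parameter-family actions inside $\mathrm{Spin}(7,\CC)$ is what makes them descend to honest actions on the Cayley Grassmannian $X$, since any element of $\mathrm{Spin}(7,\CC)$ carries Cayley planes to Cayley planes.
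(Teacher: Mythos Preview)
Your treatment of $B$-preservation is fine and matches the paper's: unit-norm quaternionic multiplication is orthogonal (Lemma~\ref{lem:orth}), so $N$ and hence $B$ are preserved.

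The $\Phi$-invariance argument, however, has genuine problems. First, several of your intermediate claims are false. Action~(3), $v\mapsto(gx,gy)$, is \emph{not} left octonionic multiplication by $g$: under Cayley--Dickson one has $g\cdot(\ol y)=\ol(yg)$ (or a conjugated variant, depending on convention), not $\ol(gy)$. Likewise actions~(1) and~(2) do \emph{not} fix $1\in\OO$: for instance $g\cdot(1,0)=(g^{-1},0)$. So the reduction via $\Phi=e^0\wedge\phi+*\phi$ is unavailable. More broadly, none of these three maps are octonion algebra automorphisms (the automorphism group is $G_2^\CC$, and these $\mathrm{SL}(2,\CC)$'s do not sit inside it), so the ``commutes with the operations in \eqref{eq:tcrpr}--\eqref{eq:qcrpr}'' heuristic does not go through. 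You implicitly concede this by retreating to a brute-force group-level check of $M^*\Phi=\Phi$ on all basis $4$-vectors, which would work but is exactly what you set out to avoid.

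The paper sidesteps all of this with a single clean idea you are missing: pass to the Lie algebra. Since $\mathrm{SL}(2,\CC)$ is connected, $M^*\Phi=\Phi$ for all $M$ in the image is equivalent to $X\cdot\Phi=0$ for all $X$ in the Lie algebra $\mathfrak{sl}(2,\CC)\cong\im(\HH)=\langle\oi,\oj,\ok\rangle$. One then simply verifies $\oi\cdot\Phi=\oj\cdot\Phi=\ok\cdot\Phi=0$ by direct computation for each of the three actions. This reduces the problem to checking three infinitesimal generators rather than a two-parameter family of group elements, and avoids any appeal to octonionic multiplicative structure beyond the definition of the actions themselves.
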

\begin{proof}
  Since $g\in$ SL(2,$\CC$) is identified with an element of $\HH$ with norm 1, multiplication by $g$ is an orthogonal transformation by Lemma \ref{lem:orth}.
  Thus, $B$ is preserved in all three actions.

  To show that $\Phi$ is also preserved, we instead look at the corresponding action of the Lie algebra $\mathfrak{sl}(2,\CC)\cong \im(\HH)=\langle\oi,\oj,\ok\rangle$.
  It is enough to show that $\oi\cdot\Phi=\oj\cdot\Phi=\ok\cdot\Phi=0$.
  We verify this by a direct computation for all three actions.
\end{proof}

\begin{remark}
Note that all three actions are faithful and thus, provide three different embeddings of SL$(2,\CC)$ into Spin(7,$\CC$).
Furthermore, we can define an action of the group $(\mt{SL}(2,\CC))^3$ on $\OO$ by
\begin{equation}
(a,b,c)\cdot(x,y) = (cxa^{-1}, cyb^{-1})
\label{eq:allactionscombined}
\end{equation}
for $a,b,c\in \mt{SL}(2,\CC)$ and $(x,y)\in\OO$.
The kernel of this action is $\left\{ \pm(1,1,1) \right\}$.
\label{rm:slembeddings}
\end{remark}

The matrices corresponding to the unipotent element $\begin{pmatrix} 1 & u \\ 0 & 1 \end{pmatrix}\in SL(2,\CC)$ (under three actions) are $\begin{bmatrix} A & 0 \\ 0 & I_4 \end{bmatrix}$, $\begin{bmatrix} I_4 & 0 \\ 0 & A \end{bmatrix}$ and $\begin{bmatrix} B & 0 \\ 0 & B \end{bmatrix}$ where
\begin{equation*}
  A = 
  \begin{pmatrix}
    1 & 0 & \frac{-u}{2} & \frac{iu}{2} \\
    0 & 1 & \frac{iu}{2} & \frac{u}{2}  \\
    \frac{u}{2} & \frac{-iu}{2} & 1 & 0  \\
    \frac{-iu}{2} & \frac{-u}{2} & 0 & 1 
  \end{pmatrix}
\end{equation*}
and
\begin{equation*}
  B = 
  \begin{pmatrix}
    1 & 0 & 0 & 0 \\
    0 & 1 & iu & u \\
    0 & -iu & 1+\frac{u^2}{2} & \frac{-iu^2}{2} \\
    0 & -u & \frac{-iu^2}{2} & 1-\frac{u^2}{2}
  \end{pmatrix}.
\end{equation*}

Clearly, the SL(2,$\CC$) actions preserve the splitting $\OO = \HH \oplus \ol\HH$.
Hence, the points $[e_{0123}],[e_{4567}]\in \xmin$ are fixed by all three of these actions. 
Restricting to the unipotent group inside SL(2,$\CC$), we conclude that fixed point sets (of the unipotent subgroup) are positive dimensional by a result of Horrocks \cite{Hor69}.

\section{Torus fixed points}\label{sec:torusfxd}
We set $\wil e_0=  1+i\oi, \wil e_1=  1-i\oi, \wil e_2=  \oj+i\ok, \wil e_3=  \oj-i\ok, \wil e_4=  \ol+i\oli, \wil e_5=  \ol-i\oli, \wil e_6=  \olj+i\olk, $ and $\wil e_7 = \olj -i \olk$.
Recall from (\ref{tbl:eiva}) that these vectors are the eigenvectors of the matrix $h(\lambda,\mu,\gamma)$. 
We also set $\wil e_{pqrs}=\wil e_p\wedge \wil e_q\wedge\wil e_r\wedge\wil e_s$.
If we denote by $\wil p_{pqrs}$ the transformed Pl\"ucker coordinates, the equations (\ref{eq:plucker1})-(\ref{eq:plucker7}) can be rewritten as
\begin{eqnarray}
  \label{eq:transformedplucker1}
\wil f_1 &:=& \wil p_{0257} - \wil p_{1346}                                                                                                 =0\\
\wil f_2 &:=& \wil p_{0146} - \wil p_{0157} - \wil p_{0245} - \wil p_{0267} + \wil p_{1345} + \wil p_{1367} + \wil p_{2346} - \wil p_{2357} =0\\
\wil f_3 &:=& \wil p_{0146} + \wil p_{0157} - \wil p_{0245} - \wil p_{0267} - \wil p_{1345} - \wil p_{1367} + \wil p_{2346} + \wil p_{2357} =0\\
\wil f_4 &:=& \wil p_{0127} - \wil p_{0136} + \wil p_{0235} - \wil p_{0567} - \wil p_{1234} + \wil p_{1467} - \wil p_{2457} + \wil p_{3456} =0\\
\wil f_5 &:=& \wil p_{0127} + \wil p_{0136} - \wil p_{0235} + \wil p_{0567} - \wil p_{1234} + \wil p_{1467} - \wil p_{2457} - \wil p_{3456} =0\\
\wil f_6 &:=& \wil p_{0125} - \wil p_{0134} - \wil p_{0237} + \wil p_{0457} + \wil p_{1236} - \wil p_{1456} - \wil p_{2567} + \wil p_{3467} =0\\
\wil f_7 &:=& \wil p_{0125} + \wil p_{0134} + \wil p_{0237} - \wil p_{0457} + \wil p_{1236} - \wil p_{1456} - \wil p_{2567} - \wil p_{3467} =0
  \label{eq:transformedplucker7}
\end{eqnarray}

Now that we diagonalized the action, it is easy to describe the eigenvalues and the corresponding eigenvectors of the action of $h(\lambda,\mu,\gamma)$ on $\Lambda^4 \OO$.
\begin{center}
  \begin{tabular}[]{c|c}
    eigenvalue & eigenvector \\ \hline
 $1$ & $\wil e_{0123}, \wil e_{0145}, \wil e_{0167}, \wil e_{0257}, \wil e_{1346}, \wil e_{2345}, \wil e_{2367}, \wil e_{4567}$ \\ \hline
 $\gamma^{-2}$ & $\wil e_{0156}, \wil e_{2356}$ \\ \hline
 $\gamma^{-2} \lambda^{-2}$ & $\wil e_{1356}$ \\ \hline
 $\gamma^{-2} \lambda^{2}$ & $\wil e_{0256}$ \\ \hline
 $\gamma^{-2} \mu^{-2}$ & $\wil e_{1256}$ \\ \hline
 $\gamma^{-2} \mu^{2}$ & $\wil e_{0356}$ \\ \hline
 $\gamma^{-1} \lambda^{-2} \mu^{-1}$ & $\wil e_{1235}, \wil e_{1567}$ \\ \hline
 $\gamma^{-1} \lambda^{-2} \mu$ & $\wil e_{0135}, \wil e_{3567}$ \\ \hline
 $\gamma^{-1} \lambda^{2} \mu^{-1}$ & $\wil e_{0126}, \wil e_{2456}$ \\ \hline
 $\gamma^{-1} \lambda^{2} \mu$ & $\wil e_{0236}, \wil e_{0456}$ \\ \hline
 $\gamma^{-1} \mu^{-1}$ & $\wil e_{0125}, \wil e_{1236}, \wil e_{1456}, \wil e_{2567}$ \\ \hline
 $\gamma^{-1} \mu$ & $\wil e_{0136}, \wil e_{0235}, \wil e_{0567}, \wil e_{3456}$ \\ \hline
 $\gamma \lambda^{-2} \mu^{-1}$ & $\wil e_{1237}, \wil e_{1457}$ \\ \hline
 $\gamma \lambda^{-2} \mu$ & $\wil e_{0137}, \wil e_{3457}$ \\ \hline
 $\gamma \lambda^{2} \mu^{-1}$ & $\wil e_{0124}, \wil e_{2467}$ \\ \hline
 $\gamma \lambda^{2} \mu$ & $\wil e_{0234}, \wil e_{0467}$ \\ \hline
 $\gamma \mu^{-1}$ & $\wil e_{0127}, \wil e_{1234}, \wil e_{1467}, \wil e_{2457}$ \\ \hline
 $\gamma \mu$ & $\wil e_{0134}, \wil e_{0237}, \wil e_{0457}, \wil e_{3467}$ \\ \hline
 $\gamma^{2}$ & $\wil e_{0147}, \wil e_{2347}$ \\ \hline
 $\gamma^{2} \lambda^{-2}$ & $\wil e_{1347}$ \\ \hline
 $\gamma^{2} \lambda^{2}$ & $\wil e_{0247}$ \\ \hline
 $\gamma^{2} \mu^{-2}$ & $\wil e_{1247}$ \\ \hline
 $\gamma^{2} \mu^{2}$ & $\wil e_{0347}$ \\ \hline
   \end{tabular}
   \end{center}
   \begin{center}
     \begin{tabular}[]{c|c}
      eigenvalue & eigenvector \\ \hline
 $\lambda^{-4}$ & $\wil e_{1357}$ \\ \hline
 $\lambda^{-2}$ & $\wil e_{0157}, \wil e_{1345}, \wil e_{1367}, \wil e_{2357}$ \\ \hline
 $\lambda^{-2} \mu^{-2}$ & $\wil e_{1257}$ \\ \hline
 $\lambda^{-2} \mu^{2}$ & $\wil e_{0357}$ \\ \hline
 $\lambda^{2}$ & $\wil e_{0146}, \wil e_{0245}, \wil e_{0267}, \wil e_{2346}$ \\ \hline
 $\lambda^{2} \mu^{-2}$ & $\wil e_{1246}$ \\ \hline
 $\lambda^{2} \mu^{2}$ & $\wil e_{0346}$ \\ \hline
 $\lambda^{4}$ & $\wil e_{0246}$ \\ \hline
 $\mu^{-2}$ & $\wil e_{1245}, \wil e_{1267}$ \\ \hline
 $\mu^{2}$ & $\wil e_{0345}, \wil e_{0367}$ 
  \end{tabular}
\end{center}

\begin{theorem}
  The following eigenvectors of $h(\lambda,\mu,\gamma)$ lie in $\xmin$. 
  \label{thm:eive}
  \begin{center}
    \begin{tabular}[]{c|c}
      eigenvalue & eigenvector \\ \hline
 $1$ & $\wil e_{0123}, \wil e_{0145}, \wil e_{0167}, \wil e_{2345}, \wil e_{2367}, \wil e_{4567}$ \\ \hline
 $\gamma^{-2}$ & $\wil e_{0156}, \wil e_{2356}$ \\ \hline
 $\gamma^{-2} \lambda^{-2}$ & $\wil e_{1356}$ \\ \hline
 $\gamma^{-2} \lambda^{2}$ & $\wil e_{0256}$ \\ \hline
 $\gamma^{-2} \mu^{-2}$ & $\wil e_{1256}$ \\ \hline
 $\gamma^{-2} \mu^{2}$ & $\wil e_{0356}$ \\ \hline
 $\gamma^{-1} \lambda^{-2} \mu^{-1}$ & $\wil e_{1235}, \wil e_{1567}$ \\ \hline
 $\gamma^{-1} \lambda^{-2} \mu$ & $\wil e_{0135}, \wil e_{3567}$ \\ \hline
 $\gamma^{-1} \lambda^{2} \mu^{-1}$ & $\wil e_{0126}, \wil e_{2456}$ \\ \hline
 $\gamma^{-1} \lambda^{2} \mu$ & $\wil e_{0236}, \wil e_{0456}$ \\ \hline
 $\gamma \lambda^{-2} \mu^{-1}$ & $\wil e_{1237}, \wil e_{1457}$ \\ \hline
 $\gamma \lambda^{-2} \mu$ & $\wil e_{0137}, \wil e_{3457}$ \\ \hline
 $\gamma \lambda^{2} \mu^{-1}$ & $\wil e_{0124}, \wil e_{2467}$ \\ \hline
 $\gamma \lambda^{2} \mu$ & $\wil e_{0234}, \wil e_{0467}$ \\ \hline
 $\gamma^{2}$ & $\wil e_{0147}, \wil e_{2347}$ \\ \hline
 $\gamma^{2} \lambda^{-2}$ & $\wil e_{1347}$ \\ \hline
 $\gamma^{2} \lambda^{2}$ & $\wil e_{0247}$ \\ \hline
 $\gamma^{2} \mu^{-2}$ & $\wil e_{1247}$ \\ \hline
 $\gamma^{2} \mu^{2}$ & $\wil e_{0347}$ \\ \hline
 $\lambda^{-4}$ & $\wil e_{1357}$ \\ \hline
 $\lambda^{-2} \mu^{-2}$ & $\wil e_{1257}$ \\ \hline
 $\lambda^{-2} \mu^{2}$ & $\wil e_{0357}$ \\ \hline
 $\lambda^{2} \mu^{-2}$ & $\wil e_{1246}$ \\ \hline
 $\lambda^{2} \mu^{2}$ & $\wil e_{0346}$ \\ \hline
 $\lambda^{4}$ & $\wil e_{0246}$ \\ \hline
 $\mu^{-2}$ & $\wil e_{1245}, \wil e_{1267}$ \\ \hline
 $\mu^{2}$ & $\wil e_{0345}, \wil e_{0367}$ 
    \end{tabular}
  \end{center}
\end{theorem}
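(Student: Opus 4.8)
\section*{Proof proposal}

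The plan is to reduce the assertion to a finite combinatorial check against the transformed linear equations $\wil f_1,\dots,\wil f_7$ of (\ref{eq:transformedplucker1})--(\ref{eq:transformedplucker7}). First I would note that, for distinct $p,q,r,s$, the vector $\wil e_{pqrs}=\wil e_p\wedge\wil e_q\wedge\wil e_r\wedge\wil e_s$ is a nonzero decomposable $4$-vector, so $[\wil e_{pqrs}]$ automatically lies on $\mt{Gr}(4,\OO)$; hence $[\wil e_{pqrs}]\in\xmin$ if and only if $\wil f_1=\dots=\wil f_7=0$ at this point. We may use the forms $\wil f_i$ rather than $f_1,\dots,f_7$ because the rewriting recorded before the theorem is by invertible linear combinations, so the common zero locus on $\mt{Gr}(4,\OO)$ is unchanged.

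Next I would observe that, since the indices are written in increasing order, $\wil e_{pqrs}$ is literally a basis vector of $\Lambda^4\OO$ in the $\wil e$-basis; thus its transformed Pl\"ucker coordinates satisfy $\wil p_{abcd}([\wil e_{pqrs}])=1$ when $\{a,b,c,d\}=\{p,q,r,s\}$ and $\wil p_{abcd}([\wil e_{pqrs}])=0$ otherwise. Consequently $\wil f_i$ vanishes at $[\wil e_{pqrs}]$ exactly when the monomial $\wil p_{pqrs}$ does not occur in $\wil f_i$, and the theorem becomes the statement that $\wil p_{pqrs}$ occurs in none of $\wil f_1,\dots,\wil f_7$ for every index set $\{p,q,r,s\}$ in the table. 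I would then collect the monomials on the right-hand sides: $\wil f_1$ involves $\wil p_{0257},\wil p_{1346}$; $\wil f_2$ and $\wil f_3$ involve $\wil p_{0146},\wil p_{0157},\wil p_{0245},\wil p_{0267},\wil p_{1345},\wil p_{1367},\wil p_{2346},\wil p_{2357}$; $\wil f_4$ and $\wil f_5$ involve $\wil p_{0127},\wil p_{0136},\wil p_{0235},\wil p_{0567},\wil p_{1234},\wil p_{1467},\wil p_{2457},\wil p_{3456}$; and $\wil f_6$ and $\wil f_7$ involve $\wil p_{0125},\wil p_{0134},\wil p_{0237},\wil p_{0457},\wil p_{1236},\wil p_{1456},\wil p_{2567},\wil p_{3467}$. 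The index sets of these $26$ coordinates are pairwise distinct, and the remaining $44$ of the $\binom{8}{4}=70$ four-element subsets of $\{0,\dots,7\}$ are precisely the index sets appearing in the table. For each of those $44$ index sets every $\wil f_i$ vanishes at the corresponding coordinate point, which therefore lies in $\xmin$; grouping these points by the eigenvalue obtained as the product of the four corresponding eigenvalues from (\ref{tbl:eiva}) reproduces the table. (Conversely, for each of the $26$ omitted index sets some $\wil f_i$ takes the value $\pm1$ there, so those coordinate points are not in $\xmin$; this is not needed for the statement but explains why the table is shorter than the full eigenvector list.)

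There is no genuine obstacle here beyond bookkeeping. The two points requiring care are that $\wil f_1,\dots,\wil f_7$ really do cut out $\xmin$ inside $\mt{Gr}(4,\OO)$ in the $\wil e$-coordinates (already recorded) and that the partition of the $70$ coordinate $4$-subsets into the $26$ appearing in the $\wil f_i$ and the complementary $44$ listed in the table is carried out correctly; the latter I would verify, and double-check, by a short direct computation.
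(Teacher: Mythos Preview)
Your proposal is correct and follows essentially the same approach as the paper: the paper's proof is simply to evaluate each eigenvector on the transformed equations $\wil f_1,\dots,\wil f_7$ and observe that precisely the listed ones satisfy them. You have spelled out the underlying combinatorics more explicitly (decomposability handles membership in $\mt{Gr}(4,\OO)$, and then vanishing of each $\wil f_i$ at $[\wil e_{pqrs}]$ is equivalent to $\wil p_{pqrs}$ not appearing in $\wil f_i$), but the argument is the same direct check.
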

\begin{proof}
  Once we evaluate all the eigenvectors on the transformed defining equations (\ref{eq:transformedplucker1})-(\ref{eq:transformedplucker7}), we see that exactly the above list of vectors satisfy them.
\end{proof}
\begin{theorem}
  The fixed point set $\xmin^{T}$ of the maximal torus action is only the above set of points.
  \label{thm:fixpnts}
\end{theorem}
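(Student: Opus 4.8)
The plan is to recognize this as a general fact about torus fixed points under a linear action, combined with Theorem~\ref{thm:eive}. For any algebraic torus $T$ acting linearly on a vector space $W$, a point $[w]\in\PP(W)$ is $T$-fixed precisely when $w$ lies in a single weight space, i.e. $\PP(W)^T=\bigsqcup_\chi \PP(W_\chi)$. Since $T\subset\mathrm{Spin}(7,\CC)$ acts on $\xmin$ through its linear action on $\Lambda^4\OO$, and $\xmin\subset\mathrm{Gr}(4,\OO)\subset\PP(\Lambda^4\OO)$ is $T$-invariant, we get $\xmin^T=\xmin\cap\PP(\Lambda^4\OO)^T$; so it suffices to first pin down $\mathrm{Gr}(4,\OO)^T$ and then impose the defining equations of $\xmin$.

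The first step is to observe that $T$ acts on $\OO$ itself with eight \emph{distinct} one-dimensional weight spaces $\CC\wil e_0,\dots,\CC\wil e_7$: by (\ref{tbl:eiva}) their weights are, in the character lattice of $T$, the vectors $(1,1,0)$, $(-1,-1,0)$, $(1,-1,0)$, $(-1,1,0)$, $(1,0,1)$, $(-1,0,-1)$, $(1,0,-1)$, $(-1,0,1)$, which are pairwise distinct (and no genericity assumption on $(\lambda,\mu,\gamma)$ is required). Hence every $T$-invariant subspace of $\OO$ is one of the coordinate subspaces $\langle \wil e_i : i\in S\rangle$. A $T$-fixed point of $\mathrm{Gr}(4,\OO)$ is the same as a $T$-invariant $4$-plane $V\subset\OO$ — one recovers $V$ from a decomposable representative $\omega$ as $V=\{v\in\OO : v\wedge\omega=0\}$, which is manifestly $T$-stable — hence is one of the $\binom{8}{4}=70$ coordinate points $[\wil e_{pqrs}]$.

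It then remains only to record which of these $70$ coordinate points satisfy the defining equations of $\xmin$. As $\xmin$ is cut out in $\mathrm{Gr}(4,\OO)$ by the seven linear forms $\wil f_1,\dots,\wil f_7$ of (\ref{eq:transformedplucker1})--(\ref{eq:transformedplucker7}), and $\wil p_{abcd}$ takes the value $1$ on $\wil e_{pqrs}$ when $\{a,b,c,d\}=\{p,q,r,s\}$ and $0$ otherwise, the point $[\wil e_{pqrs}]$ lies in $\xmin$ exactly when $\wil p_{pqrs}$ occurs in none of $\wil f_1,\dots,\wil f_7$. This is exactly the bookkeeping already performed in the proof of Theorem~\ref{thm:eive}, so $\xmin^T$ equals the set listed there (and each of those points is obviously $T$-fixed). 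The only point needing genuine attention is the verification that the eight weights are distinct; after that the statement is essentially a corollary of Theorem~\ref{thm:eive}, and I expect the ``main obstacle'' to be purely expository — namely stating cleanly why the fixed locus of the ambient Grassmannian consists exactly of the coordinate $4$-planes.
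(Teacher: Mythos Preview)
Your proof is correct and takes a genuinely different route from the paper's. The paper works entirely inside $\PP(\Lambda^4\OO)$: it observes that a $T$-fixed point of $\xmin$ must lie in a single weight space of $\Lambda^4\OO$, and since several of those weight spaces are multi-dimensional (the eigenvalue $1$ space is $8$-dimensional, for instance), it still has to rule out nontrivial linear combinations. It does this by invoking the Pl\"ucker relation (\ref{eq:plucker}) and checking, case by case, that any two basis vectors $\wil e_I,\wil e_J$ sharing a weight have index sets differing in at least two positions, so the right-hand side of the relevant Pl\"ucker relation vanishes while the left does not. Your argument bypasses this entirely by descending to $\OO$: because the eight weights of $T$ on $\OO$ are pairwise distinct, every $T$-invariant $4$-plane is already a coordinate plane, so $\mathrm{Gr}(4,\OO)^T$ consists of exactly the $70$ points $[\wil e_{pqrs}]$ with no further work. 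What you gain is cleanliness and the elimination of the implicit case check (``This argument also works for other eigenspaces\dots''); what the paper's approach buys is that it never leaves the Pl\"ucker picture, which is the ambient setting for all of its later computations.
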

\begin{proof}
  We only need to verify that in eigenspaces of dimension greater than one, there are no other fixed points.
  We prove this for the eigenspace associated to the eigenvalue 1.
  Let 
  $$\wil x = \wil c_{0123}\wil e_{0123}+\wil c_{0145}\wil e_{0145}+\wil c_{0167}\wil e_{0167}+\wil c_{2345}\wil e_{2345}+\wil c_{2367}\wil e_{2367}+\wil c_{4567}\wil e_{4567}\in \xmin$$
  be a torus fixed point that is different from
 $\wil e_{0123}, \wil e_{0145}, \wil e_{0167}, \wil e_{2345}, \wil e_{2367},$ and $\wil e_{4567}$.
 Then, at least two of the coordinates $\wil c_I,$ and $\wil c_J$ are nonzero for index sets $I\neq J$.
 Let $I=\{i_1i_2i_3j_1\}$, and $J=\{j_2j_3j_4j_5\}$.
 If $\wil x\in\xmin \subset$ Gr(4,$\OO$) than it has to satisfy the Pl\"ucker relation (\ref{eq:plucker})
 \[
  \wil c_{i_1i_2i_3j_1}\wil c_{j_2j_3j_4j_5}= \wil c_{i_1i_2i_3j_2}\wil c_{j_1j_3j_4j_5}-\wil c_{i_1i_2i_3j_3}\wil c_{j_1j_2j_4j_5}+\wil c_{i_1i_2i_3j_4}\wil c_{j_1j_2j_3j_5}-\wil c_{i_1i_2i_3j_5}\wil c_{j_1j_2j_3j_4}.
 \]
 Now, the left hand side of this relation is nonzero by choice.
 However, the right hand side has to be zero as the index sets of eigenvectors associated to eigenvalue 1 differ by at least two elements.
 This means that $\wil x$ does not belong to $\mt{Gr}(4,\OO)$ and hence, $\wil x\notin \xmin$.

 This argument also works for other eigenspaces of dimension greater than one.
\end{proof}

\section{The Cayley Grassmannian}\label{sec:mincompt}
We cite the following well known theorem, here.
\begin{theorem}[Jacobian Criterion for Smoothness]\label{thm:jacobian}
  Let $I=(f_1,\dots,f_m)$ be an ideal from $\CC[x_1,\dots,x_n]$ and let $x\in V(I)$ be a point from the vanishing locus of $I$ in $\CC^n$.
  Suppose $d=\dim V(I)$.
  If the rank of the Jacobian matrix $(\partial f_i/\partial x_j)_{i=1,\dots,m,\ j=1,\dots,n}$ at $x$ is equal to $n-d$, then $x$ is a smooth point of $V(I)$.
\end{theorem}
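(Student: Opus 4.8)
The plan is to translate the geometric statement into the standard dimension inequality for Noetherian local rings. After a translation we may assume $x=0$, so that the maximal ideal of $\CC[x_1,\dots,x_n]$ at $x$ is $\mathfrak m=(x_1,\dots,x_n)$. Put $A=\CC[x_1,\dots,x_n]/I$, let $\mathfrak n=\mathfrak m/I\subset A$ be the maximal ideal corresponding to $x$, and let $B=A_{\mathfrak n}$ with maximal ideal $\mathfrak m_B$. Over $\CC$, saying that $x$ is a smooth point of $V(I)$ is the same as saying that the local ring of the variety $V(I)$ at $x$ is regular. Since dimension is insensitive to nilpotents, $\dim B=\dim_x V(I)=d$ (here ``$d=\dim V(I)$'' is read as the local dimension at $x$; in our applications $V(I)$ is irreducible, so this is automatic).

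The first step is to identify the Zariski cotangent space $\mathfrak m_B/\mathfrak m_B^2$. Since $\mathfrak m_B/\mathfrak m_B^2$ is a $\CC$-vector space annihilated by $\mathfrak m_B$, it is unaffected by localization, so it equals $\mathfrak n/\mathfrak n^2\cong\mathfrak m/(\mathfrak m^2+I)$. Now $\mathfrak m/\mathfrak m^2\cong\CC^n$ with basis the classes of $x_1,\dots,x_n$, and for any $g\in\CC[x_1,\dots,x_n]$ one has $g f_i\equiv g(x)\,f_i\pmod{\mathfrak m^2}$ because $g-g(x)$ and $f_i$ both lie in $\mathfrak m$; hence the image of $I$ in $\mathfrak m/\mathfrak m^2$ is the span of the linear parts $\sum_j\tfrac{\partial f_i}{\partial x_j}(x)\,x_j$ of the $f_i$, that is, the row space of the Jacobian matrix $J_x$ at $x$. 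Therefore $\dim_\CC\mathfrak m_B/\mathfrak m_B^2=n-\operatorname{rank}J_x$, which by hypothesis equals $n-(n-d)=d$.

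It remains to apply the inequality $\dim B\le\dim_\CC\mathfrak m_B/\mathfrak m_B^2$, valid for every Noetherian local ring (the maximal ideal is generated by $\dim_\CC\mathfrak m_B/\mathfrak m_B^2$ elements by Nakayama, and the Krull dimension is bounded by the number of generators of any $\mathfrak m_B$-primary ideal). Combined with the previous step this gives $d=\dim B\le\dim_\CC\mathfrak m_B/\mathfrak m_B^2=d$, so equality holds and $B$ is a regular local ring by definition. A regular local ring is an integral domain, hence reduced, so its nilradical vanishes; consequently the local ring $B$ of the scheme $\operatorname{Spec}\big(\CC[x_1,\dots,x_n]/I\big)$ coincides, near $x$, with the local ring of the reduced variety $V(I)$, and $x$ is a smooth point of $V(I)$.

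The computations here are all routine; the point requiring care is the logical sequencing of the standard commutative-algebra inputs — the identification of the conormal space with the row space of $J_x$, the embedding-dimension inequality, and the implication ``regular $\Rightarrow$ reduced'' that upgrades regularity of the scheme $\operatorname{Spec}(\CC[x_1,\dots,x_n]/I)$ to smoothness of the variety $V(I)$. One should also note the equidimensionality hypothesis hidden in ``$d=\dim V(I)$'': it is cleanest to take $d$ to be the local dimension of $V(I)$ at $x$ (automatic when $V(I)$ is equidimensional, as for our $X$), since at a point lying only on a lower-dimensional component the conclusion can fail.
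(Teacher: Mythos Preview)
Your proof is correct and is the standard commutative-algebra argument for the Jacobian criterion: identify the Zariski cotangent space $\mathfrak m_B/\mathfrak m_B^2$ with $\mathfrak m/(\mathfrak m^2+I)$, observe that the image of $I$ in $\mathfrak m/\mathfrak m^2$ is the row space of the Jacobian at $x$, and then invoke the embedding-dimension inequality $\dim B\le\dim_\CC\mathfrak m_B/\mathfrak m_B^2$ to force regularity. Your care with the side issues (local versus global dimension, and the passage from regularity of $\operatorname{Spec}(\CC[x_1,\dots,x_n]/I)$ to smoothness of the reduced variety via ``regular $\Rightarrow$ reduced'') is appropriate and correct.

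There is nothing to compare against: the paper does not prove Theorem~\ref{thm:jacobian}. It is introduced with the sentence ``We cite the following well known theorem, here'' and is used as a black box in the analysis of the torus fixed points. So your argument is not an alternative to the paper's proof but rather supplies the standard justification the paper omits.
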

Torus action takes singular locus to singular locus.
Since the singular locus is also a projective variety, it must contain a torus fixed point by Borel fixed point theorem.
Therefore, we can check whether $\xmin$ is smooth or not, by using the Jacobian criterion on the torus fixed points.
\begin{theorem}\label{thm:fxdpnts}
  Among the fixed points listed in Theorem \ref{thm:eive}, all but the following six of them are smooth points.
  \[
    \wil e_{0246}, \wil e_{0347}, \wil e_{0356}, \wil e_{1247}, \wil e_{1256}, \wil e_{1357}
  \]
\end{theorem}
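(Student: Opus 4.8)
The strategy is a direct application of the Jacobian Criterion (Theorem~\ref{thm:jacobian}) at each of the 34 torus fixed points listed in Theorem~\ref{thm:eive}. By the remarks preceding the statement, it suffices to work fixed point by fixed point: for each $\widetilde e_{pqrs}$ we pass to the affine chart $U_{pqrs}$ of $\mathrm{Gr}(4,\OO)$, where the 16 Grassmannian variables $q_{ijkl}$ (those with $|\{i,j,k,l\}\cap\{p,q,r,s\}|=3$) give a smooth coordinate system centered at the origin, and the fixed point itself becomes the origin. In this chart the seven transformed equations $\widetilde f_1,\dots,\widetilde f_7$ of (\ref{eq:transformedplucker1})--(\ref{eq:transformedplucker7}) localize (exactly as illustrated for $U_{0123}$ in Section~\ref{sec:charts}, but now using the $\widetilde p$-coordinates and the transformed relations) to seven polynomials $\widehat f_1,\dots,\widehat f_7$ in the 16 local variables. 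Since $\dim X = 12$ and $\dim\mathrm{Gr}(4,\OO)=16$, the criterion says: if the rank of the $7\times 16$ Jacobian $(\partial \widehat f_i/\partial q_{jklm})$ evaluated at the origin equals $16-12 = 4$, then that fixed point is a smooth point of $X$.

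\textbf{Execution.} First I would set up the localization once and for all: fix a chart $U_{pqrs}$, use the Pl\"ucker relations (\ref{eq:plucker}) to write every $\widetilde p$-coordinate appearing in $\widetilde f_1,\dots,\widetilde f_7$ as a polynomial in the 16 local variables, and substitute. The linear part of each $\widehat f_i$ at the origin is simply the part of $\widetilde f_i$ that is linear in the local Grassmannian coordinates after this substitution — the nonlinear terms of $\widetilde f_i$ contribute nothing to the Jacobian at the origin, and nonlinear contributions coming from substituting Pl\"ucker-expressed coordinates also vanish to first order. So the computation reduces to reading off, for each fixed point and each $i$, the coefficient vector of the linearization, assembling the $7\times 16$ matrix, and computing its rank. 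For the 28 smooth fixed points this rank comes out to $4$; for the six exceptional points $\widetilde e_{0246},\widetilde e_{0347},\widetilde e_{0356},\widetilde e_{1247},\widetilde e_{1256},\widetilde e_{1357}$ the rank drops below $4$ (one checks it is $3$), so the Jacobian Criterion does not certify smoothness there — and indeed these six will be shown to be genuinely singular in the next section. In practice this is a finite, mechanical computation best carried out with computer algebra, and I would present it as such, exhibiting the rank-$4$ minor explicitly for one representative smooth orbit (say $\widetilde e_{0145}$) and recording that the remaining cases are entirely analogous.

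\textbf{Main obstacle.} The conceptual content is light; the real work is bookkeeping. The delicate step is the localization via Pl\"ucker relations: for a general chart $U_{pqrs}$ some of the coordinates in $\widetilde f_i$ are themselves \emph{not} among the 16 local variables and must be eliminated using (\ref{eq:plucker}), possibly iterated, and one must be careful that this elimination is being done consistently with the same sign conventions used to derive (\ref{eq:transformedplucker1})--(\ref{eq:transformedplucker7}). A secondary subtlety is that the criterion is only a sufficient condition for smoothness, so for the 28 points we genuinely get smoothness, but for the six bad points we are only entitled to say the criterion fails — the actual singularity must be (and is) verified separately in Section~\ref{sec:mincompt}. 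Finally, to conclude that these six exhaust the singular locus among \emph{all} of $X$ (not just among fixed points) one invokes the two facts recorded just before the statement: the Spin$(7,\CC)$-action, and hence the torus action, preserves $\mathrm{Sing}(X)$, and $\mathrm{Sing}(X)$ is a closed — hence projective — subvariety, so by the Borel fixed point theorem it must contain a torus fixed point; therefore if none of the 28 ``good'' fixed points were singular, $\mathrm{Sing}(X)$ is contained in the closure of the orbits through the six ``bad'' ones.
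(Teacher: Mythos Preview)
Your approach is exactly the one the paper takes: localize the seven transformed equations to the affine Grassmannian chart centered at each fixed point, observe that Pl\"ucker substitution contributes only higher-order terms so the Jacobian at the origin is read off from the linear part of $\wil f_1,\dots,\wil f_7$, and then check that the rank is $4$ at the smooth points and drops at the six listed ones. Two small corrections: there are $44$ fixed points in Theorem~\ref{thm:eive}, not $34$; and at the six exceptional points the Jacobian rank drops to $1$, not $3$ (each $\wil J$ there has a single nonzero row up to sign). Your observation that the Jacobian criterion as stated is only sufficient, so that the actual singularity of the six points is established separately, is a fair caveat that the paper leaves implicit.
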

\begin{proof}
  We need to analyze neighborhoods of fixed points by using affine charts.
  We start with the fixed point $m=\wil e_{0123}$, which lies on the open chart $\wil U_{0123}$ as its origin.
  Recall that $\xmin$ is cut-out on $\wil U_{0123}$ by the vanishing of the seven linear equations (\ref{eq:transformedplucker1})-(\ref{eq:transformedplucker7}).
  At first, it may seem that it is necessary to express these equations in local variables $\wil q_{0124},\wil q_{0125},\wil q_{0126},\wil q_{0127},\wil q_{0134},\wil q_{0135},\wil q_{0136},\wil q_{0137},$ $\wil q_{0234}, \wil q_{0235},\wil q_{0236},\wil q_{0237},\wil q_{1234},\wil q_{1235},\wil q_{1236},$ and $\wil q_{1237}$.
  However, the Pl\"ucker relations (in local coordinates) replace linear terms with higher order terms (see, for example, (\ref{eq:pluckerexampleq})) and we compute Jacobian at the origin.
  So, there will be no contribution to Jacobian matrix from other variables.
  The Jacobian matrix at $m$ is given by
  \begin{eqnarray*}
    \wil J_{0123} &=&  
    \begin{pmatrix}
      0 & 0 & 0 & 0 & 0 & 0 & 0 & 0 & 0 & 0 & 0 & 0 & 0 & 0 & 0 & 0 \\ 
      0 & 0 & 0 & 0 & 0 & 0 & 0 & 0 & 0 & 0 & 0 & 0 & 0 & 0 & 0 & 0 \\ 
      0 & 0 & 0 & 0 & 0 & 0 & 0 & 0 & 0 & 0 & 0 & 0 & 0 & 0 & 0 & 0 \\ 
      0 & 0 & 0 & 1 & 0 & 0 & -1 & 0 & 0 & 1 & 0 & 0 & -1 & 0 & 0 & 0 \\ 
      0 & 0 & 0 & 1 & 0 & 0 & 1 & 0 & 0 & -1 & 0 & 0 & -1 & 0 & 0 & 0 \\ 
      0 & 1 & 0 & 0 & -1 & 0 & 0 & 0 & 0 & 0 & 0 & -1 & 0 & 0 & 1 & 0 \\ 
      0 & 1 & 0 & 0 & 1 & 0 & 0 & 0 & 0 & 0 & 0 & 1 & 0 & 0 & 1 & 0
    \end{pmatrix}
  \end{eqnarray*}
  which is of rank four. So, it is at most 12-dimensional. However, it contains the 12-dimensional $\xmin^0$. So, it must be 12-dimensional, that is, codimension four in Gr(4,$\OO$). Hence, by Theorem~\ref{thm:jacobian}, $\wil e_{0123}$ is a smooth point of $\xmin$.

  We repeat this computation for the other points and see that their Jacobian matrices are all rank four, except for the six points we have listed above.
  Thus, they are all smooth points of $\xmin$.
\end{proof}


\section{Singular Locus}\label{sec:singloc}
Next, we turn our attention to the singular locus $\sing:=\singlong$ of $\xmin$.
The torus action on $\xmin$ restricts to $\sing$.
By Theorem~\ref{thm:fxdpnts}, we know that there are six points in $\sing^T$.
We quote the following lemma from \cite{BCM02}.
\begin{lemma}
  \label{lem:cntFxdPnts}
  If $Y\subset \mathbb P(V)$ is a projective $T$-variety, then $Y^T$ contains at least dim $Y+1$ points.
\end{lemma}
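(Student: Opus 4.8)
The plan is to reduce to a single $\CC^*$ acting, invoke the Bialynicki--Birula decomposition, and then induct on $\dim Y$. A few harmless reductions come first. Since $T$ is connected it stabilizes each irreducible component of $Y$, and $\dim Y$ equals the dimension of some component $Y_0$ with $Y_0^T\subseteq Y^T$; so we may assume $Y$ irreducible. We may also assume $Y^T$ is finite, the statement being vacuous otherwise. Now induct on $d=\dim Y$, the case $d=0$ being trivial. For $d\ge 1$ the $T$-action is nontrivial (a trivial action would make $Y^T=Y$ infinite), and we may pick a one-parameter subgroup $\lambda\colon\CC^*\to T$ with $Y^{\lambda(\CC^*)}=Y^T$: this holds as soon as $\lambda$ avoids finitely many hyperplanes in the cocharacter lattice, since $Y^T=\bigcap_\chi Y^{\ker\chi}$ is cut out by finitely many characters.

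For the resulting $\CC^*$-action on the complete variety $Y$, every limit $\lim_{t\to 0}\lambda(t)\cdot y$ exists and is a fixed point, so the Bialynicki--Birula decomposition writes $Y$ as the disjoint union of the locally closed $T$-stable attracting cells $Y^+_p=\{\,y:\lim_{t\to 0}\lambda(t)\cdot y=p\,\}$, $p\in Y^T$. The generic point of $Y$ lies in exactly one of them, say $Y^+_{p_0}=:U$, which, being locally closed and dense, is open. Using the embedding $Y\subset\PP(V)$, $U$ is the intersection of $Y$ with the $\CC^*$-attracting set of $p_0$ in $\PP(V)$; the latter is an affine subspace, so $U$, closed inside it, is an affine variety. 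In particular $U\ne Y$, since a positive-dimensional irreducible projective variety is not affine. Set $W:=Y\setminus U$: a nonempty closed $T$-stable subset containing $Y^T\setminus\{p_0\}$.

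The key geometric point is that $W$ has an irreducible component of dimension $d-1$. If instead every component of $W$ had codimension $\ge 2$, then after passing to the normalization of $Y$ (again a projective $T$-variety, finite over $Y$) a Hartogs-type extension would force every global function on the affine $U$ to extend to $Y$, so $\CC=\Gamma(Y,\mathcal O_Y)=\Gamma(U,\mathcal O_U)$ and $U$ would be a point, contradicting $\dim U=d\ge 1$. Granting the claim, choose a top-dimensional component $W_0$ of $W$; it is $T$-stable, of dimension $d-1$, with $W_0^T\subseteq Y^T$ finite, so by the inductive hypothesis $|W_0^T|\ge d$; since $p_0\notin W\supseteq W_0$ we get $|Y^T|\ge |W_0^T|+1\ge d+1$.

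The main obstacle is this codimension-one claim --- controlling the complement of the big cell --- and carrying the bookkeeping through the possibly non-normal, reducible singular setting; it rests on the facts that a $\CC^*$-variety with a unique, all-attracting fixed point is affine and that the complement of a dense affine open in a projective variety is a divisor. In the smooth case one can skip the induction: the plus-cells are affine spaces, so $H^{\mathrm{odd}}(Y)=0$ and $\dim H^{2k}(Y)$ counts the plus-cells of complex dimension $k$; Hard Lefschetz makes each $\dim H^{2k}(Y)\ge 1$ for $0\le k\le d$, whence $|Y^T|=\sum_k\dim H^{2k}(Y)\ge d+1$.
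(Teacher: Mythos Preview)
The paper does not prove this lemma at all; it is quoted verbatim from \cite{BCM02} without argument. So there is no ``paper's proof'' to compare against, and your proposal stands on its own.

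Your argument is sound. The reduction to an irreducible $Y$ with finite $Y^T$ and a regular one-parameter subgroup is standard and correct. The key step---that the attracting cell $U=Y^+_{p_0}$ is a dense \emph{affine} open---is justified exactly as you say: $U$ is the intersection of $Y$ with the linear attracting cell of $p_0$ in $\PP(V)$, which is an honest affine space; hence $U$ is closed in an affine and therefore affine, and locally closed plus dense in the irreducible $Y$ forces it to be open. The codimension-one claim for $W=Y\setminus U$ is the only place one must be careful in the singular setting, and your normalization trick handles it cleanly: $\tilde Y$ is normal irreducible projective, $\tilde U=\nu^{-1}(U)$ is affine (finite over affine), and if $\operatorname{codim}(W,Y)\ge 2$ then the same holds for $\tilde W$, so algebraic Hartogs gives $\Gamma(\tilde U,\mathcal O)=\Gamma(\tilde Y,\mathcal O)=\CC$, contradicting $\dim\tilde U=d\ge 1$. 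The induction on the $T$-stable component $W_0$ then finishes the count. Note that you do not actually need the $T$-action to lift to $\tilde Y$ for this step; the normalization is used only to certify the codimension, and the induction is applied to $W_0\subset Y$ itself.

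Your closing remark for the smooth case (Bia\l ynicki--Birula cells are affine spaces, Hard Lefschetz forces $\dim H^{2k}\ge 1$) is also correct and is essentially how the paper uses this lemma downstream in Theorem~\ref{thm:cohomp5}.
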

Therefore, we have the following corollary.
\begin{corollary}
The singular locus $\sing$ is at most five-dimensional.
\end{corollary}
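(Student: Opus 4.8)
The plan is to apply Lemma~\ref{lem:cntFxdPnts} to $Y=\sing$. First I would observe that $\sing$ is a projective $T$-variety: it is a closed subvariety of $\xmin\subset\mt{Gr}(4,\OO)\subset\PP(\Lambda^4\OO)$, hence projective, and since every element of the maximal torus $T$ lies in $\mt{Spin}(7,\CC)$, which acts on $\xmin$ by biregular automorphisms, it must preserve the intrinsically defined singular locus $\sing$. Thus the $T$-action on $\xmin$ restricts to an action on $\sing$, and Lemma~\ref{lem:cntFxdPnts} applies, giving $\dim\sing+1\le\#\,\sing^T$.

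Next I would pin down $\sing^T$ exactly. A point of $\sing^T$ is a $T$-fixed point of $\xmin$ that happens to be singular, and conversely any singular $T$-fixed point of $\xmin$ lies in $\sing$. By Theorem~\ref{thm:fixpnts}, the set $\xmin^T$ is precisely the list of Theorem~\ref{thm:eive}, and by Theorem~\ref{thm:fxdpnts} exactly six of those points fail the Jacobian criterion, namely $\wil e_{0246},\wil e_{0347},\wil e_{0356},\wil e_{1247},\wil e_{1256},\wil e_{1357}$. Hence $\sing^T$ consists of exactly these six points, so $\#\,\sing^T=6$.

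Combining the two steps yields $\dim\sing+1\le 6$, that is $\dim\sing\le 5$, which is the assertion. If one worries that $\sing$ might be reducible, the inequality still works applied to a top-dimensional irreducible component $\sing_0$: it is itself a projective $T$-variety with $\sing_0^T\subseteq\sing^T$ and $\dim\sing_0=\dim\sing$, so $\dim\sing+1\le\#\,\sing_0^T\le 6$.

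There is no genuine obstacle here; the content lies entirely in the already-established Theorems~\ref{thm:fixpnts} and \ref{thm:fxdpnts} together with the quoted Lemma. The only point requiring a moment's care is the bookkeeping that $\sing^T$ is neither larger nor smaller than the six-element list: larger is ruled out because $\xmin^T$ is itself exactly the enumerated set, and smaller is ruled out because all six of those points were shown to be singular.
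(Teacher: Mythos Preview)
Your argument is correct and follows the same route as the paper, which simply states the corollary as an immediate consequence of Lemma~\ref{lem:cntFxdPnts} together with the fact that $\sing^T$ has (at most) six points. Your write-up merely makes explicit the steps the paper leaves implicit; the only superfluous part is the ``smaller is ruled out'' discussion, since the bound $\dim\sing\le 5$ requires only $\#\,\sing^T\le 6$, not equality.
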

As we did with $\xmin$, we can check whether $\sing$ is singular or not, by using Jacobian criterion on these six torus fixed points $\sing^T$.
\begin{theorem}
  \label{thm:singLocSmooth}
  The singular locus $\sing$ is smooth and five-dimensional.
\end{theorem}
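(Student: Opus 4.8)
The plan is to run the same argument as in the proof of Theorem~\ref{thm:fxdpnts}, but now with $\xmin$ replaced by its singular locus $\sing$. Formation of the singular locus is intrinsic, so $\mt{Sing}(\sing)$ is again a closed, $T$-stable subvariety of $\PP(\Lambda^4\OO)$; were it nonempty, it would be projective and hence contain a torus fixed point by the Borel fixed point theorem. Since $\sing^T=\{\wil e_{0246},\wil e_{0347},\wil e_{0356},\wil e_{1247},\wil e_{1256},\wil e_{1357}\}$ by Theorem~\ref{thm:fxdpnts}, it therefore suffices to prove that each of these six points is a smooth point of $\sing$ of local dimension $5$ (the bound $\dim\sing\le 5$ is the corollary above).

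To examine a point $\wil e_{pqrs}\in\sing^T$, I work on the affine chart $\wil U_{pqrs}$ with its $16$ local Grassmannian coordinates, on which $\xmin$ is the zero locus of the localized equations $\wil f_1,\dots,\wil f_7$ as in the proof of Theorem~\ref{thm:fxdpnts}. Because $\xmin$ is irreducible of codimension $4$ in the smooth variety $\mt{Gr}(4,\OO)$, the set $\sing\cap\wil U_{pqrs}$ is cut out by the $\wil f_i$ together with the $4\times4$ minors of the Jacobian matrix $(\partial\wil f_i/\partial\wil q_\bullet)$. Here a warning is needed: at each of these six points that Jacobian has very low rank --- the computation behind Theorem~\ref{thm:fxdpnts} shows it to be $1$ --- so the $4\times4$ minors vanish to order at least $3$ at the point, and the \emph{naive} ideal generated by the $\wil f_i$ and those minors is non-reduced there (it has the same tangent space, of dimension $15$, as $\xmin$ itself). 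One must therefore first pass to the reduced ideal $I_{\sing}$ of $\sing\cap\wil U_{pqrs}$ --- a radical (or primary decomposition) computation --- and then apply the Jacobian criterion, Theorem~\ref{thm:jacobian}, to a generating set of $I_{\sing}$.

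The content of the computation is then the claim that at each of the six points the Jacobian of that generating set has rank exactly $11$, so that $\dim T_{\wil e_{pqrs}}\sing=16-11=5$; moreover the explicit generators present $\sing\cap\wil U_{pqrs}$ near the point as a complete intersection of $11$ equations, whence $\dim_{\wil e_{pqrs}}\sing\ge 5$ by Krull's height theorem. Combined with $\dim\sing\le5$, Theorem~\ref{thm:jacobian} shows that $\wil e_{pqrs}$ is a smooth point of $\sing$ lying on a $5$-dimensional component. Running this over all six fixed points and invoking the Borel argument of the first paragraph gives $\mt{Sing}(\sing)=\varnothing$; and since each connected component of the smooth variety $\sing$ is closed, projective and $T$-stable, it contains one of the six fixed points and so has dimension $5$. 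Hence $\sing$ is smooth of pure dimension $5$. (That the scheme $V(I_{\sing})$ coincides with the reduced variety $\sing$ is automatic once it is known to be smooth.)

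I expect the only real obstacle to be computational. The conceptual part --- reducing the question to the six fixed points via the torus action and Borel's theorem --- is short, but producing the reduced local equations of $\sing$ in each of the six charts, starting from seven polynomials in $16$ variables together with the $4\times4$ minors of their Jacobian, requires a radical/primary-decomposition computation best carried out with computer algebra, much as the verification $T\subset\mt{Spin}(7,\CC)$ in Lemma~\ref{lm:maxtor} was. Once these ideals are in hand the rank-$11$ check at the origin of each chart is immediate.
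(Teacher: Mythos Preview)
Your proposal is correct and follows essentially the same route as the paper: reduce via the Borel fixed point theorem to the six points of $\sing^T$, in each affine chart cut out $\sing$ by the localized $\wil f_i$ together with the $4\times4$ minors of their Jacobian, pass to the radical (computed with Singular), and then apply the Jacobian criterion. The paper in fact finds that at $\wil e_{0246}$ the radical is generated by eleven \emph{linear} forms in the sixteen local Grassmannian coordinates, so the smoothness and the dimension count $16-11=5$ are immediate without invoking Krull's height theorem; your anticipation that the reduced ideal would behave well is borne out.
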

\begin{proof}
  We start with the point $\wil e_{0246}$.
  This point lies at the origin of the chart $\wil U_{0246}=\left\{x\in\PP(\Lambda^4\OO)\;|\; \wil p_{0246}(x)\neq 0\right\}$.
  On this chart the local variables are $\wil q_{0124},$ $\wil q_{0126},$ $\wil q_{0146},$ $\wil q_{0234},$ $\wil q_{0236},$ $\wil q_{0245},$ $\wil q_{0247},$ $\wil q_{0256},$ $\wil q_{0267},$ $\wil q_{0346},$ $\wil q_{0456},$ $\wil q_{0467},$ $\wil q_{1246},$ $\wil q_{2346},$ $\wil q_{2456},$ and $\wil q_{2467}$ where $\wil q_{ijkl}=\wil p_{ijkl}/\wil p_{0246}$.
  Since the codimension of $\xmin$ in $\text{Gr}(4,\OO)$ is four, $\sing$ is locally the vanishing locus of the equations (\ref{eq:transformedplucker1})-(\ref{eq:transformedplucker7}) localized to $\wil U_{0246}$ and all $4\times 4$ minors of the Jacobian of these localized equations.
  These equations by themselves do not generate a radical ideal, so we take the radical ideal generated by those equations with the help of a software called Singular.
  It turns out the ideal is generated by
  \begin{eqnarray*}
    & & \wil q_{1246},\quad 
    \wil q_{0346},\quad 
    \wil q_{0267}- \wil q_{2346},\quad 
    \wil q_{0256},\quad 
    \wil q_{0247},\quad 
    \wil q_{0245}- \wil q_{2346},\quad 
    \wil q_{0236}- \wil q_{0456},\\
    & & \wil q_{0234}- \wil q_{0467},\quad 
    \wil q_{0146}- \wil q_{2346},\quad 
    \wil q_{0126}- \wil q_{2456},\quad 
    \text{and}\quad \wil q_{0124}- \wil q_{2467}.
  \end{eqnarray*}
  So, $\sing$ is just cut out by some hyperplanes in $\wil U_{0246}\cap$Gr(4,$\OO$).
  Therefore, it is clearly smooth at the origin and of dimension five.
  We repeat this computation for the other fixed points and see they are all smooth points of $\sing$. Hence, $\sing$ is smooth and five-dimensional.
\end{proof}

\begin{theorem}
  The  singular locus $\sing$ has the same cohomology ring (over $\QQ$) as $\CC\PP^5$.
  \label{thm:cohomp5}
\end{theorem}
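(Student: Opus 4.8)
The plan is to bypass the explicit equations and use instead the torus action together with the projectivity of $\sing$. By Theorem~\ref{thm:singLocSmooth}, $\sing$ is smooth, projective and five-dimensional, and by Theorem~\ref{thm:fxdpnts} its $T$-fixed locus $\sing^{T}$ consists of exactly six points. I would first record that $\sing$ is irreducible: being smooth, $\sing$ is the disjoint union of its irreducible components; each component is a projective $T$-variety, so by the Borel fixed point theorem it contains a point of $\sing^{T}$, near which $\sing$ is five-dimensional by Theorem~\ref{thm:singLocSmooth}. Thus every component is five-dimensional and, by Lemma~\ref{lem:cntFxdPnts}, carries at least $5+1=6$ fixed points; since $\sing^{T}$ has only six points, there is exactly one component.

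Next I would apply the Bialynicki--Birula decomposition. Since $\sing^{T}$ is finite, one may choose a one-parameter subgroup $\sigma\colon\CC^{*}\to T$ with $\sing^{\sigma(\CC^{*})}=\sing^{T}$. Because $\sing$ is smooth and projective, the plus-decomposition $\sing=\bigsqcup_{p\in\sing^{T}}C_{p}$ with $C_{p}=\{x\in\sing:\lim_{t\to0}\sigma(t)\cdot x=p\}$ is a filtrable decomposition into affine cells $C_{p}\cong\mathbb{A}^{n_{p}}$, where $n_{p}$ is the number of positive $\sigma$-weights on the tangent space $T_{p}\sing$. An affine paving by six cells yields $H^{\mathrm{odd}}(\sing;\QQ)=0$ and $\sum_{k\ge0}\dim_{\QQ}H^{2k}(\sing;\QQ)=6$.

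It then remains to pin down the grading and the product, which I would do via ampleness. Let $h\in H^{2}(\sing;\QQ)$ be the restriction of the hyperplane class of the ambient projective space. Since $h$ is ample, $\int_{\sing}h^{5}=\deg\sing>0$, so $h^{5}\neq0$, and consequently $h^{i}\cdot h^{5-i}=h^{5}\neq0$ forces $h^{i}\neq0$ for every $0\le i\le 5$. Hence $H^{2i}(\sing;\QQ)\neq0$ for $i=0,1,\dots,5$; six nonzero graded summands of total dimension six must each be one-dimensional, with $h^{i}$ spanning $H^{2i}(\sing;\QQ)$. Therefore the graded ring homomorphism $\QQ[x]\to H^{*}(\sing;\QQ)$ sending $x$ (placed in degree $2$) to $h$ is surjective with kernel $(x^{6})$, so $H^{*}(\sing;\QQ)\cong\QQ[x]/(x^{6})\cong H^{*}(\CC\PP^{5};\QQ)$ as graded rings.

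The only step that is not pure bookkeeping is the input that the plus-decomposition of a smooth projective $\CC^{*}$-variety with isolated fixed points is a filtrable affine paving computing cohomology; this is classical (Bialynicki--Birula) and I would simply cite it. As an alternative, once $\sing$ has been identified with the rational homogeneous variety $G_{2}^{\CC}/P_{2}$, one may use its Schubert cell decomposition as the affine paving, after which the ampleness argument concludes in exactly the same way.
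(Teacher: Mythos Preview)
Your proof is correct and follows essentially the same approach as the paper: smooth projectivity together with the six torus fixed points and the Bia\l ynicki--Birula decomposition determine the Betti numbers, and the ample (equivalently, K\"ahler) class forces each $H^{2i}$ to be one-dimensional. You are in fact more careful than the paper's own argument, which invokes the K\"ahler class only to get $b_{2i}\ge1$ and leaves both the irreducibility of $\sing$ and the multiplicative structure of $H^{*}(\sing;\QQ)$ implicit; your ampleness step makes the ring isomorphism explicit.
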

\begin{proof}
  By Theorem \ref{thm:singLocSmooth}, we see that $\sing$ is a smooth projective variety and hence, it is K\"ahler.
  Thus, $2i^{th}$ Betti number is at least one for $i=0,\dots,5$.
  So, the sum of its Betti numbers is at least six.
  On the other hand, there is a torus action on $\sing$ (induced from $T$-action on $\xmin$) with exactly six fixed points.
  Thus, by Bia\l ynicki-Birula decomposition \cite{BB73}, the sum of Betti numbers is exactly six.
\end{proof}

Next, we restrict the action of Spin(7,$\CC$) on $\Sigma$ to the subgroup $G_2^\CC$.
A maximal torus for $G_2^\CC$ is given by $T\cap G_2^\CC$.
Let $\mf g$ denote the Lie algebra of $G_2^\CC$ and $\mf h$ denote the Cartan subalgebra corresponding to our choice of maximal torus.
Choose a set of positive roots $S^{+}$ so that $\wil e_{0246}$ has the highest weight and let $\Delta=\left\{ \alpha_1,\alpha_2 \right\}$ be the set of simple roots corresponding to this choice where $\alpha_2$ is the longer root.
Let $P_i$ be the parabolic subgroup of $G_2^{\CC}$ whose Lie algebra is $\mf g_{-\alpha_i}\bigoplus \mf h \bigoplus\left( \oplus_{\alpha\in S^+} \mf g_{\alpha} \right)$ where $\mf g_{\alpha}=\left\{ X\in \mf g \ |\  [H,X]=\alpha(H)X \ \mt{for all } H\in \mf h\right\}$.
A straight-forward, albeit lengthy, calculation shows that the stabilizer subgroup of $\wil e_{0246}$ is $P_2$.
Hence, we get the following theorem.
\begin{theorem}
$G_2^\CC$ acts on the singular locus $\Sigma$ and the stabilizer group of $\wil e_{0246}$ is $P_2$.
So, by dimensional reasons, $\Sigma = G_2^\CC/P_2$.
\label{thm:parabolicquotient}
\end{theorem}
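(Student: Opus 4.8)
The plan is to exhibit $\sing$ as a single closed $G_2^\CC$-orbit and to show that the stabilizer of $\wil e_{0246}$ is exactly $P_2$; the identification $\sing=G_2^\CC/P_2$ then follows for dimension reasons. First I would note that $G_2^\CC\subset\mt{Spin}(7,\CC)$ acts on $\xmin$ by automorphisms of varieties, hence preserves the singular locus, so the $\mt{Spin}(7,\CC)$-action on $\xmin$ restricts to a $G_2^\CC$-action on $\sing$, and $\wil e_{0246}\in\sing$ by Theorem~\ref{thm:fxdpnts}. Set $H=\mt{Stab}_{G_2^\CC}([\wil e_{0246}])$ and $\mathcal O=G_2^\CC\cdot[\wil e_{0246}]\subseteq\sing$. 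Granting $H=P_2$: then $\dim\mathcal O=\dim G_2^\CC-\dim P_2=14-(2+6+1)=5$, the orbit map $G_2^\CC/P_2\to\sing$ is an immersion onto $\mathcal O$ with complete source, so $\mathcal O$ is closed in $\sing$, and $5=\dim\mathcal O=\dim\sing$ by Theorem~\ref{thm:singLocSmooth}. Since $\sing$ is smooth (Theorem~\ref{thm:singLocSmooth}) and connected (its $H^0$ is that of $\CC\PP^5$ by Theorem~\ref{thm:cohomp5}), it is irreducible, so the closed, full-dimensional $\mathcal O$ equals $\sing$, giving $\sing\cong G_2^\CC/P_2$.

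The real content is the computation of $H$. I would first identify $T\cap G_2^\CC$: an element $h(\lambda,\mu,\gamma)$ lies in $G_2^\CC$ iff it fixes $1\in\OO$, which by the eigenvector table~(\ref{tbl:eiva}) holds iff $\mu=\lambda^{-1}$; so $\{h(\lambda,\lambda^{-1},\gamma)\}$ is a maximal torus of $G_2^\CC$ with Cartan subalgebra $\mf h$. Restricting the $\Lambda^4\OO$ eigenvalue tables to this subtorus gives the $\mf h$-weight of $\wil e_{0246}$; one checks this is a top weight among those occurring in $\Lambda^4\OO$ and chooses $S^+$ (equivalently $\Delta=\{\alpha_1,\alpha_2\}$, $\alpha_2$ long) so that $\wil e_{0246}$ is a highest weight vector. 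Now $\mt{Lie}(H)=\{X\in\mf g : X\cdot\wil e_{0246}\in\CC\,\wil e_{0246}\}$. For $\alpha\in S^+$, $\mf g_\alpha\cdot\wil e_{0246}$ lies in a weight space strictly higher than that of $\wil e_{0246}$, hence vanishes, so $\mf h\oplus(\oplus_{\alpha\in S^+}\mf g_\alpha)\subseteq\mt{Lie}(H)$; in particular $H$ contains a Borel subgroup and is parabolic, hence is pinned down by which of $\mf g_{-\alpha_1},\mf g_{-\alpha_2}$ it contains. Since $\mf g_{-\alpha_i}\cdot\wil e_{0246}$ has weight $\neq$ that of $\wil e_{0246}$, containment amounts to $\mf g_{-\alpha_i}\cdot\wil e_{0246}=0$; the claim is that a direct computation of the $\mf g$-action on $\Lambda^4\OO$ gives $\mf g_{-\alpha_2}\cdot\wil e_{0246}=0$ but $\mf g_{-\alpha_1}\cdot\wil e_{0246}\neq0$ (equivalently, the weight of $\wil e_{0246}$ is a positive multiple of $\omega_1$). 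Then $\mt{Lie}(H)=\mf g_{-\alpha_2}\oplus\mf h\oplus(\oplus_{\alpha\in S^+}\mf g_\alpha)=\mt{Lie}(P_2)$, forcing $H=P_2$.

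The main obstacle is this last step. It requires writing down an explicit root-vector basis of $\mf g=\mt{Lie}(G_2^\CC)\subset\mathfrak{so}(7,\CC)$ with respect to $\mf h$, together with the induced representation on $\Lambda^4\OO$, and then checking that among all the negative root spaces exactly $\mf g_{-\alpha_2}$ annihilates $\wil e_{0246}$. This is the ``straightforward albeit lengthy'' calculation, best handled with computer algebra in the spirit of Lemma~\ref{lm:maxtor}. The remaining ingredients --- $G_2^\CC$-invariance of $\sing$, the passage from $\mt{Lie}(H)$ to $H$ via the classification of parabolics containing a fixed Borel, and the concluding completeness-plus-dimension argument --- are formal.
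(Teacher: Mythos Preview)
Your proposal is correct and follows the same two-step strategy as the paper: establish that the stabilizer of $[\wil e_{0246}]$ in $G_2^\CC$ is $P_2$, then conclude by a dimension count that the closed $G_2^\CC$-orbit fills the smooth, irreducible, five-dimensional $\Sigma$. Your organization of the stabilizer computation via the highest-weight principle (so that the Borel is automatic and only the two negative simple root spaces need to be tested, equivalently the weight of $\wil e_{0246}$ is $2\omega_1$) is exactly the kind of structure the paper alludes to with ``a straight-forward, albeit lengthy, calculation,'' and your dimension argument makes explicit what the paper leaves as ``by dimensional reasons.''
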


\section{Torus action near smooth fixed points}\label{sec:toruslocalsmooth}
Note that torus action induces an action on the tangent spaces at the fixed points.
In this section, we choose a regular one parameter subgroup $\tau(\lambda)$ of $T$ and then describe the induced action on these tangent spaces.
The subgroup $\tau(\lambda)$ is regular in the sense that the fixed point set of $T$ and of $\tau(\lambda)$ are the same.

We choose the one parameter subgroup $\tau(\lambda)=h(\lambda,\lambda^{10},\lambda^{100})$ for our computations.
This is regular since it pairs to a non-trivial homomorphism ($\CC^*\to \CC^*$) with any character.
Below, we compute a weight space decomposition for each tangent space.
We list the weight vectors with their corresponding weights and give the number of positive weights.
On a chart $\wil U_{stun}=\left\{ x\in\PP(\Lambda^4\OO)\;|\; \wil p_{stun}(x)\neq 0 \right\}$, we denote the vector field $\frac{\partial}{\partial \wil q_{ijkl}}$ by $\partial_{ijkl}$.

At $\wil e_{0247}$ in $\wil U_{0247}$, the kernel of $\wil J_{0247}$ is generated by the following vectors
$$\begin{tabular}{c|c|c|c|c|c|c}
vector & $\partial_{0124}$ & $\partial_{0147}$ & $\partial_{0234}$ & $\partial_{0246}$ & $\partial_{0347}$ & $\partial_{0467}$\\\hline
weight & $\lambda^{-110}$ & $\lambda^{-2}$ & $\lambda^{-90}$ & $\lambda^{-198}$ & $\lambda^{18}$ & $\lambda^{-90}$
\end{tabular}$$
$$\begin{tabular}{c|c|c|c|c|c|c}
vector & $\partial_{1247}$ & $\partial_{2347}$ & $\partial_{2467}$ & $\partial_{0127} + \partial_{2457}$ & $\partial_{0237} + \partial_{0457}$ & $\partial_{0245} - \partial_{0267}$\\\hline
weight & $\lambda^{-22}$ & $\lambda^{-2}$ & $\lambda^{-110}$ & $\lambda^{-112}$ & $\lambda^{-92}$ & $\lambda^{-200}$
\end{tabular}$$
and the number of positive weights is 1.

At $\wil e_{0147}$ in $\wil U_{0147}$, the kernel of $\wil J_{0147}$ is generated by the following vectors
$$\begin{tabular}{c|c|c|c|c|c|c}
vector & $\partial_{0124}$ & $\partial_{0137}$ & $\partial_{0145}$ & $\partial_{0167}$ & $\partial_{0247}$ & $\partial_{0347}$\\\hline
weight & $\lambda^{-108}$ & $\lambda^{-92}$ & $\lambda^{-200}$ & $\lambda^{-200}$ & $\lambda^{2}$ & $\lambda^{20}$
\end{tabular}$$
$$\begin{tabular}{c|c|c|c|c|c|c}
vector & $\partial_{0467}$ & $\partial_{1247}$ & $\partial_{1347}$ & $\partial_{1457}$ & $\partial_{0127} - \partial_{1467}$ & $\partial_{0134} + \partial_{0457}$\\\hline
weight & $\lambda^{-88}$ & $\lambda^{-20}$ & $\lambda^{-2}$ & $\lambda^{-112}$ & $\lambda^{-110}$ & $\lambda^{-90}$
\end{tabular}$$
and the number of positive weights is 2.

At $\wil e_{2347}$ in $\wil U_{2347}$, the kernel of $\wil J_{2347}$ is generated by the following vectors
$$\begin{tabular}{c|c|c|c|c|c|c}
vector & $\partial_{0234}$ & $\partial_{0247}$ & $\partial_{0347}$ & $\partial_{1237}$ & $\partial_{1247}$ & $\partial_{1347}$\\\hline
weight & $\lambda^{-88}$ & $\lambda^{2}$ & $\lambda^{20}$ & $\lambda^{-112}$ & $\lambda^{-20}$ & $\lambda^{-2}$
\end{tabular}$$
$$\begin{tabular}{c|c|c|c|c|c|c}
vector & $\partial_{2345}$ & $\partial_{2367}$ & $\partial_{2467}$ & $\partial_{3457}$ & $\partial_{0237} + \partial_{3467}$ & $\partial_{1234} - \partial_{2457}$\\\hline
weight & $\lambda^{-200}$ & $\lambda^{-200}$ & $\lambda^{-108}$ & $\lambda^{-92}$ & $\lambda^{-90}$ & $\lambda^{-110}$
\end{tabular}$$
and the number of positive weights is 2.

At $\wil e_{0234}$ in $\wil U_{0234}$, the kernel of $\wil J_{0234}$ is generated by the following vectors
$$\begin{tabular}{c|c|c|c|c|c|c}
vector & $\partial_{0123}$ & $\partial_{0124}$ & $\partial_{0236}$ & $\partial_{0246}$ & $\partial_{0247}$ & $\partial_{0345}$\\\hline
weight & $\lambda^{-112}$ & $\lambda^{-20}$ & $\lambda^{-200}$ & $\lambda^{-108}$ & $\lambda^{90}$ & $\lambda^{-92}$
\end{tabular}$$
$$\begin{tabular}{c|c|c|c|c|c|c}
vector & $\partial_{0346}$ & $\partial_{0347}$ & $\partial_{2345}$ & $\partial_{2347}$ & $\partial_{0134} - \partial_{0237}$ & $\partial_{0245} + \partial_{2346}$\\\hline
weight & $\lambda^{-90}$ & $\lambda^{108}$ & $\lambda^{-112}$ & $\lambda^{88}$ & $\lambda^{-2}$ & $\lambda^{-110}$
\end{tabular}$$
and the number of positive weights is 3.

At $\wil e_{0467}$ in $\wil U_{0467}$, the kernel of $\wil J_{0467}$ is generated by the following vectors
$$\begin{tabular}{c|c|c|c|c|c|c}
vector & $\partial_{0147}$ & $\partial_{0167}$ & $\partial_{0246}$ & $\partial_{0247}$ & $\partial_{0346}$ & $\partial_{0347}$\\\hline
weight & $\lambda^{88}$ & $\lambda^{-112}$ & $\lambda^{-108}$ & $\lambda^{90}$ & $\lambda^{-90}$ & $\lambda^{108}$
\end{tabular}$$
$$\begin{tabular}{c|c|c|c|c|c|c}
vector & $\partial_{0367}$ & $\partial_{0456}$ & $\partial_{2467}$ & $\partial_{4567}$ & $\partial_{0146} + \partial_{0267}$ & $\partial_{0457} - \partial_{3467}$\\\hline
weight & $\lambda^{-92}$ & $\lambda^{-200}$ & $\lambda^{-20}$ & $\lambda^{-112}$ & $\lambda^{-110}$ & $\lambda^{-2}$
\end{tabular}$$
and the number of positive weights is 3.

At $\wil e_{1347}$ in $\wil U_{1347}$, the kernel of $\wil J_{1347}$ is generated by the following vectors
$$\begin{tabular}{c|c|c|c|c|c|c}
vector & $\partial_{0137}$ & $\partial_{0147}$ & $\partial_{0347}$ & $\partial_{1237}$ & $\partial_{1247}$ & $\partial_{1357}$\\\hline
weight & $\lambda^{-90}$ & $\lambda^{2}$ & $\lambda^{22}$ & $\lambda^{-110}$ & $\lambda^{-18}$ & $\lambda^{-202}$
\end{tabular}$$
$$\begin{tabular}{c|c|c|c|c|c|c}
vector & $\partial_{1457}$ & $\partial_{2347}$ & $\partial_{3457}$ & $\partial_{0134} + \partial_{3467}$ & $\partial_{1234} + \partial_{1467}$ & $\partial_{1345} - \partial_{1367}$\\\hline
weight & $\lambda^{-110}$ & $\lambda^{2}$ & $\lambda^{-90}$ & $\lambda^{-88}$ & $\lambda^{-108}$ & $\lambda^{-200}$
\end{tabular}$$
and the number of positive weights is 3.

At $\wil e_{0124}$ in $\wil U_{0124}$, the kernel of $\wil J_{0124}$ is generated by the following vectors
$$\begin{tabular}{c|c|c|c|c|c|c}
vector & $\partial_{0123}$ & $\partial_{0126}$ & $\partial_{0145}$ & $\partial_{0147}$ & $\partial_{0234}$ & $\partial_{0246}$\\\hline
weight & $\lambda^{-92}$ & $\lambda^{-200}$ & $\lambda^{-92}$ & $\lambda^{108}$ & $\lambda^{20}$ & $\lambda^{-88}$
\end{tabular}$$
$$\begin{tabular}{c|c|c|c|c|c|c}
vector & $\partial_{0247}$ & $\partial_{1245}$ & $\partial_{1246}$ & $\partial_{1247}$ & $\partial_{0127} + \partial_{1234}$ & $\partial_{0146} + \partial_{0245}$\\\hline
weight & $\lambda^{110}$ & $\lambda^{-112}$ & $\lambda^{-110}$ & $\lambda^{88}$ & $\lambda^{-2}$ & $\lambda^{-90}$
\end{tabular}$$
and the number of positive weights is 4.

At $\wil e_{0137}$ in $\wil U_{0137}$, the kernel of $\wil J_{0137}$ is generated by the following vectors
$$\begin{tabular}{c|c|c|c|c|c|c}
vector & $\partial_{0123}$ & $\partial_{0135}$ & $\partial_{0147}$ & $\partial_{0167}$ & $\partial_{0347}$ & $\partial_{0357}$\\\hline
weight & $\lambda^{-108}$ & $\lambda^{-200}$ & $\lambda^{92}$ & $\lambda^{-108}$ & $\lambda^{112}$ & $\lambda^{-90}$
\end{tabular}$$
$$\begin{tabular}{c|c|c|c|c|c|c}
vector & $\partial_{0367}$ & $\partial_{1237}$ & $\partial_{1347}$ & $\partial_{1357}$ & $\partial_{0134} - \partial_{0237}$ & $\partial_{0157} + \partial_{1367}$\\\hline
weight & $\lambda^{-88}$ & $\lambda^{-20}$ & $\lambda^{90}$ & $\lambda^{-112}$ & $\lambda^{2}$ & $\lambda^{-110}$
\end{tabular}$$
and the number of positive weights is 4.

At $\wil e_{0346}$ in $\wil U_{0346}$, the kernel of $\wil J_{0346}$ is generated by the following vectors
$$\begin{tabular}{c|c|c|c|c|c|c}
vector & $\partial_{0234}$ & $\partial_{0236}$ & $\partial_{0246}$ & $\partial_{0345}$ & $\partial_{0347}$ & $\partial_{0356}$\\\hline
weight & $\lambda^{90}$ & $\lambda^{-110}$ & $\lambda^{-18}$ & $\lambda^{-2}$ & $\lambda^{198}$ & $\lambda^{-202}$
\end{tabular}$$
$$\begin{tabular}{c|c|c|c|c|c|c}
vector & $\partial_{0367}$ & $\partial_{0456}$ & $\partial_{0467}$ & $\partial_{0134} + \partial_{3467}$ & $\partial_{0136} + \partial_{3456}$ & $\partial_{0146} - \partial_{2346}$\\\hline
weight & $\lambda^{-2}$ & $\lambda^{-110}$ & $\lambda^{90}$ & $\lambda^{88}$ & $\lambda^{-112}$ & $\lambda^{-20}$
\end{tabular}$$
and the number of positive weights is 4.

At $\wil e_{2467}$ in $\wil U_{2467}$, the kernel of $\wil J_{2467}$ is generated by the following vectors
$$\begin{tabular}{c|c|c|c|c|c|c}
vector & $\partial_{0246}$ & $\partial_{0247}$ & $\partial_{0467}$ & $\partial_{1246}$ & $\partial_{1247}$ & $\partial_{1267}$\\\hline
weight & $\lambda^{-88}$ & $\lambda^{110}$ & $\lambda^{20}$ & $\lambda^{-110}$ & $\lambda^{88}$ & $\lambda^{-112}$
\end{tabular}$$
$$\begin{tabular}{c|c|c|c|c|c|c}
vector & $\partial_{2347}$ & $\partial_{2367}$ & $\partial_{2456}$ & $\partial_{4567}$ & $\partial_{0267} + \partial_{2346}$ & $\partial_{1467} + \partial_{2457}$\\\hline
weight & $\lambda^{108}$ & $\lambda^{-92}$ & $\lambda^{-200}$ & $\lambda^{-92}$ & $\lambda^{-90}$ & $\lambda^{-2}$
\end{tabular}$$
and the number of positive weights is 4.

At $\wil e_{3457}$ in $\wil U_{3457}$, the kernel of $\wil J_{3457}$ is generated by the following vectors
$$\begin{tabular}{c|c|c|c|c|c|c}
vector & $\partial_{0345}$ & $\partial_{0347}$ & $\partial_{0357}$ & $\partial_{1347}$ & $\partial_{1357}$ & $\partial_{1457}$\\\hline
weight & $\lambda^{-88}$ & $\lambda^{112}$ & $\lambda^{-90}$ & $\lambda^{90}$ & $\lambda^{-112}$ & $\lambda^{-20}$
\end{tabular}$$
$$\begin{tabular}{c|c|c|c|c|c|c}
vector & $\partial_{2345}$ & $\partial_{2347}$ & $\partial_{3567}$ & $\partial_{4567}$ & $\partial_{0457} - \partial_{3467}$ & $\partial_{1345} + \partial_{2357}$\\\hline
weight & $\lambda^{-108}$ & $\lambda^{92}$ & $\lambda^{-200}$ & $\lambda^{-108}$ & $\lambda^{2}$ & $\lambda^{-110}$
\end{tabular}$$
and the number of positive weights is 4.

At $\wil e_{0345}$ in $\wil U_{0345}$, the kernel of $\wil J_{0345}$ is generated by the following vectors
$$\begin{tabular}{c|c|c|c|c|c|c}
vector & $\partial_{0135}$ & $\partial_{0145}$ & $\partial_{0234}$ & $\partial_{0346}$ & $\partial_{0347}$ & $\partial_{0356}$\\\hline
weight & $\lambda^{-112}$ & $\lambda^{-20}$ & $\lambda^{92}$ & $\lambda^{2}$ & $\lambda^{200}$ & $\lambda^{-200}$
\end{tabular}$$
$$\begin{tabular}{c|c|c|c|c|c|c}
vector & $\partial_{0357}$ & $\partial_{0456}$ & $\partial_{2345}$ & $\partial_{3457}$ & $\partial_{0134} + \partial_{0457}$ & $\partial_{0235} - \partial_{3456}$\\\hline
weight & $\lambda^{-2}$ & $\lambda^{-108}$ & $\lambda^{-20}$ & $\lambda^{88}$ & $\lambda^{90}$ & $\lambda^{-110}$
\end{tabular}$$
and the number of positive weights is 5.

At $\wil e_{0367}$ in $\wil U_{0367}$, the kernel of $\wil J_{0367}$ is generated by the following vectors
$$\begin{tabular}{c|c|c|c|c|c|c}
vector & $\partial_{0137}$ & $\partial_{0167}$ & $\partial_{0236}$ & $\partial_{0346}$ & $\partial_{0347}$ & $\partial_{0356}$\\\hline
weight & $\lambda^{88}$ & $\lambda^{-20}$ & $\lambda^{-108}$ & $\lambda^{2}$ & $\lambda^{200}$ & $\lambda^{-200}$
\end{tabular}$$
$$\begin{tabular}{c|c|c|c|c|c|c}
vector & $\partial_{0357}$ & $\partial_{0467}$ & $\partial_{2367}$ & $\partial_{3567}$ & $\partial_{0136} - \partial_{0567}$ & $\partial_{0237} + \partial_{3467}$\\\hline
weight & $\lambda^{-2}$ & $\lambda^{92}$ & $\lambda^{-20}$ & $\lambda^{-112}$ & $\lambda^{-110}$ & $\lambda^{90}$
\end{tabular}$$
and the number of positive weights is 5.

At $\wil e_{1237}$ in $\wil U_{1237}$, the kernel of $\wil J_{1237}$ is generated by the following vectors
$$\begin{tabular}{c|c|c|c|c|c|c}
vector & $\partial_{0123}$ & $\partial_{0137}$ & $\partial_{1235}$ & $\partial_{1247}$ & $\partial_{1257}$ & $\partial_{1267}$\\\hline
weight & $\lambda^{-88}$ & $\lambda^{20}$ & $\lambda^{-200}$ & $\lambda^{92}$ & $\lambda^{-110}$ & $\lambda^{-108}$
\end{tabular}$$
$$\begin{tabular}{c|c|c|c|c|c|c}
vector & $\partial_{1347}$ & $\partial_{1357}$ & $\partial_{2347}$ & $\partial_{2367}$ & $\partial_{0127} + \partial_{1234}$ & $\partial_{1367} + \partial_{2357}$\\\hline
weight & $\lambda^{110}$ & $\lambda^{-92}$ & $\lambda^{112}$ & $\lambda^{-88}$ & $\lambda^{2}$ & $\lambda^{-90}$
\end{tabular}$$
and the number of positive weights is 5.

At $\wil e_{1457}$ in $\wil U_{1457}$, the kernel of $\wil J_{1457}$ is generated by the following vectors
$$\begin{tabular}{c|c|c|c|c|c|c}
vector & $\partial_{0145}$ & $\partial_{0147}$ & $\partial_{1245}$ & $\partial_{1247}$ & $\partial_{1257}$ & $\partial_{1347}$\\\hline
weight & $\lambda^{-88}$ & $\lambda^{112}$ & $\lambda^{-108}$ & $\lambda^{92}$ & $\lambda^{-110}$ & $\lambda^{110}$
\end{tabular}$$
$$\begin{tabular}{c|c|c|c|c|c|c}
vector & $\partial_{1357}$ & $\partial_{1567}$ & $\partial_{3457}$ & $\partial_{4567}$ & $\partial_{0157} + \partial_{1345}$ & $\partial_{1467} + \partial_{2457}$\\\hline
weight & $\lambda^{-92}$ & $\lambda^{-200}$ & $\lambda^{20}$ & $\lambda^{-88}$ & $\lambda^{-90}$ & $\lambda^{2}$
\end{tabular}$$
and the number of positive weights is 5.

At $\wil e_{0123}$ in $\wil U_{0123}$, the kernel of $\wil J_{0123}$ is generated by the following vectors
$$\begin{tabular}{c|c|c|c|c|c|c}
vector & $\partial_{0124}$ & $\partial_{0126}$ & $\partial_{0135}$ & $\partial_{0137}$ & $\partial_{0234}$ & $\partial_{0236}$\\\hline
weight & $\lambda^{92}$ & $\lambda^{-108}$ & $\lambda^{-92}$ & $\lambda^{108}$ & $\lambda^{112}$ & $\lambda^{-88}$
\end{tabular}$$
$$\begin{tabular}{c|c|c|c|c|c|c}
vector & $\partial_{1235}$ & $\partial_{1237}$ & $\partial_{0125} - \partial_{1236}$ & $\partial_{0127} + \partial_{1234}$ & $\partial_{0134} - \partial_{0237}$ & $\partial_{0136} + \partial_{0235}$\\\hline
weight & $\lambda^{-112}$ & $\lambda^{88}$ & $\lambda^{-110}$ & $\lambda^{90}$ & $\lambda^{110}$ & $\lambda^{-90}$
\end{tabular}$$
and the number of positive weights is 6.

At $\wil e_{0145}$ in $\wil U_{0145}$, the kernel of $\wil J_{0145}$ is generated by the following vectors
$$\begin{tabular}{c|c|c|c|c|c|c}
vector & $\partial_{0124}$ & $\partial_{0135}$ & $\partial_{0147}$ & $\partial_{0156}$ & $\partial_{0345}$ & $\partial_{0456}$\\\hline
weight & $\lambda^{92}$ & $\lambda^{-92}$ & $\lambda^{200}$ & $\lambda^{-200}$ & $\lambda^{20}$ & $\lambda^{-88}$
\end{tabular}$$
$$\begin{tabular}{c|c|c|c|c|c|c}
vector & $\partial_{1245}$ & $\partial_{1457}$ & $\partial_{0125} + \partial_{1456}$ & $\partial_{0134} + \partial_{0457}$ & $\partial_{0146} + \partial_{0245}$ & $\partial_{0157} + \partial_{1345}$\\\hline
weight & $\lambda^{-20}$ & $\lambda^{88}$ & $\lambda^{-110}$ & $\lambda^{110}$ & $\lambda^{2}$ & $\lambda^{-2}$
\end{tabular}$$
and the number of positive weights is 6.

At $\wil e_{0167}$ in $\wil U_{0167}$, the kernel of $\wil J_{0167}$ is generated by the following vectors
$$\begin{tabular}{c|c|c|c|c|c|c}
vector & $\partial_{0126}$ & $\partial_{0137}$ & $\partial_{0147}$ & $\partial_{0156}$ & $\partial_{0367}$ & $\partial_{0467}$\\\hline
weight & $\lambda^{-108}$ & $\lambda^{108}$ & $\lambda^{200}$ & $\lambda^{-200}$ & $\lambda^{20}$ & $\lambda^{112}$
\end{tabular}$$
$$\begin{tabular}{c|c|c|c|c|c|c}
vector & $\partial_{1267}$ & $\partial_{1567}$ & $\partial_{0127} - \partial_{1467}$ & $\partial_{0136} - \partial_{0567}$ & $\partial_{0146} + \partial_{0267}$ & $\partial_{0157} + \partial_{1367}$\\\hline
weight & $\lambda^{-20}$ & $\lambda^{-112}$ & $\lambda^{90}$ & $\lambda^{-90}$ & $\lambda^{2}$ & $\lambda^{-2}$
\end{tabular}$$
and the number of positive weights is 6.

At $\wil e_{0357}$ in $\wil U_{0357}$, the kernel of $\wil J_{0357}$ is generated by the following vectors
$$\begin{tabular}{c|c|c|c|c|c|c}
vector & $\partial_{0135}$ & $\partial_{0137}$ & $\partial_{0345}$ & $\partial_{0347}$ & $\partial_{0356}$ & $\partial_{0367}$\\\hline
weight & $\lambda^{-110}$ & $\lambda^{90}$ & $\lambda^{2}$ & $\lambda^{202}$ & $\lambda^{-198}$ & $\lambda^{2}$
\end{tabular}$$
$$\begin{tabular}{c|c|c|c|c|c|c}
vector & $\partial_{1357}$ & $\partial_{3457}$ & $\partial_{3567}$ & $\partial_{0157} - \partial_{2357}$ & $\partial_{0235} + \partial_{0567}$ & $\partial_{0237} + \partial_{0457}$\\\hline
weight & $\lambda^{-22}$ & $\lambda^{90}$ & $\lambda^{-110}$ & $\lambda^{-20}$ & $\lambda^{-108}$ & $\lambda^{92}$
\end{tabular}$$
and the number of positive weights is 6.

At $\wil e_{1246}$ in $\wil U_{1246}$, the kernel of $\wil J_{1246}$ is generated by the following vectors
$$\begin{tabular}{c|c|c|c|c|c|c}
vector & $\partial_{0124}$ & $\partial_{0126}$ & $\partial_{0246}$ & $\partial_{1245}$ & $\partial_{1247}$ & $\partial_{1256}$\\\hline
weight & $\lambda^{110}$ & $\lambda^{-90}$ & $\lambda^{22}$ & $\lambda^{-2}$ & $\lambda^{198}$ & $\lambda^{-202}$
\end{tabular}$$
$$\begin{tabular}{c|c|c|c|c|c|c}
vector & $\partial_{1267}$ & $\partial_{2456}$ & $\partial_{2467}$ & $\partial_{0146} - \partial_{2346}$ & $\partial_{1234} + \partial_{1467}$ & $\partial_{1236} + \partial_{1456}$\\\hline
weight & $\lambda^{-2}$ & $\lambda^{-90}$ & $\lambda^{110}$ & $\lambda^{20}$ & $\lambda^{108}$ & $\lambda^{-92}$
\end{tabular}$$
and the number of positive weights is 6.

At $\wil e_{2345}$ in $\wil U_{2345}$, the kernel of $\wil J_{2345}$ is generated by the following vectors
$$\begin{tabular}{c|c|c|c|c|c|c}
vector & $\partial_{0234}$ & $\partial_{0345}$ & $\partial_{1235}$ & $\partial_{1245}$ & $\partial_{2347}$ & $\partial_{2356}$\\\hline
weight & $\lambda^{112}$ & $\lambda^{20}$ & $\lambda^{-112}$ & $\lambda^{-20}$ & $\lambda^{200}$ & $\lambda^{-200}$
\end{tabular}$$
$$\begin{tabular}{c|c|c|c|c|c|c}
vector & $\partial_{2456}$ & $\partial_{3457}$ & $\partial_{0235} - \partial_{3456}$ & $\partial_{0245} + \partial_{2346}$ & $\partial_{1234} - \partial_{2457}$ & $\partial_{1345} + \partial_{2357}$\\\hline
weight & $\lambda^{-108}$ & $\lambda^{108}$ & $\lambda^{-90}$ & $\lambda^{2}$ & $\lambda^{90}$ & $\lambda^{-2}$
\end{tabular}$$
and the number of positive weights is 6.

At $\wil e_{2367}$ in $\wil U_{2367}$, the kernel of $\wil J_{2367}$ is generated by the following vectors
$$\begin{tabular}{c|c|c|c|c|c|c}
vector & $\partial_{0236}$ & $\partial_{0367}$ & $\partial_{1237}$ & $\partial_{1267}$ & $\partial_{2347}$ & $\partial_{2356}$\\\hline
weight & $\lambda^{-88}$ & $\lambda^{20}$ & $\lambda^{88}$ & $\lambda^{-20}$ & $\lambda^{200}$ & $\lambda^{-200}$
\end{tabular}$$
$$\begin{tabular}{c|c|c|c|c|c|c}
vector & $\partial_{2467}$ & $\partial_{3567}$ & $\partial_{0237} + \partial_{3467}$ & $\partial_{0267} + \partial_{2346}$ & $\partial_{1236} + \partial_{2567}$ & $\partial_{1367} + \partial_{2357}$\\\hline
weight & $\lambda^{92}$ & $\lambda^{-92}$ & $\lambda^{110}$ & $\lambda^{2}$ & $\lambda^{-110}$ & $\lambda^{-2}$
\end{tabular}$$
and the number of positive weights is 6.

At $\wil e_{4567}$ in $\wil U_{4567}$, the kernel of $\wil J_{4567}$ is generated by the following vectors
$$\begin{tabular}{c|c|c|c|c|c|c}
vector & $\partial_{0456}$ & $\partial_{0467}$ & $\partial_{1457}$ & $\partial_{1567}$ & $\partial_{2456}$ & $\partial_{2467}$\\\hline
weight & $\lambda^{-88}$ & $\lambda^{112}$ & $\lambda^{88}$ & $\lambda^{-112}$ & $\lambda^{-108}$ & $\lambda^{92}$
\end{tabular}$$
$$\begin{tabular}{c|c|c|c|c|c|c}
vector & $\partial_{3457}$ & $\partial_{3567}$ & $\partial_{0457} - \partial_{3467}$ & $\partial_{0567} + \partial_{3456}$ & $\partial_{1456} - \partial_{2567}$ & $\partial_{1467} + \partial_{2457}$\\\hline
weight & $\lambda^{108}$ & $\lambda^{-92}$ & $\lambda^{110}$ & $\lambda^{-90}$ & $\lambda^{-110}$ & $\lambda^{90}$
\end{tabular}$$
and the number of positive weights is 6.

At $\wil e_{0236}$ in $\wil U_{0236}$, the kernel of $\wil J_{0236}$ is generated by the following vectors
$$\begin{tabular}{c|c|c|c|c|c|c}
vector & $\partial_{0123}$ & $\partial_{0126}$ & $\partial_{0234}$ & $\partial_{0246}$ & $\partial_{0256}$ & $\partial_{0346}$\\\hline
weight & $\lambda^{88}$ & $\lambda^{-20}$ & $\lambda^{200}$ & $\lambda^{92}$ & $\lambda^{-110}$ & $\lambda^{110}$
\end{tabular}$$
$$\begin{tabular}{c|c|c|c|c|c|c}
vector & $\partial_{0356}$ & $\partial_{0367}$ & $\partial_{2356}$ & $\partial_{2367}$ & $\partial_{0136} + \partial_{0235}$ & $\partial_{0267} + \partial_{2346}$\\\hline
weight & $\lambda^{-92}$ & $\lambda^{108}$ & $\lambda^{-112}$ & $\lambda^{88}$ & $\lambda^{-2}$ & $\lambda^{90}$
\end{tabular}$$
and the number of positive weights is 7.

At $\wil e_{0456}$ in $\wil U_{0456}$, the kernel of $\wil J_{0456}$ is generated by the following vectors
$$\begin{tabular}{c|c|c|c|c|c|c}
vector & $\partial_{0145}$ & $\partial_{0156}$ & $\partial_{0246}$ & $\partial_{0256}$ & $\partial_{0345}$ & $\partial_{0346}$\\\hline
weight & $\lambda^{88}$ & $\lambda^{-112}$ & $\lambda^{92}$ & $\lambda^{-110}$ & $\lambda^{108}$ & $\lambda^{110}$
\end{tabular}$$
$$\begin{tabular}{c|c|c|c|c|c|c}
vector & $\partial_{0356}$ & $\partial_{0467}$ & $\partial_{2456}$ & $\partial_{4567}$ & $\partial_{0146} + \partial_{0245}$ & $\partial_{0567} + \partial_{3456}$\\\hline
weight & $\lambda^{-92}$ & $\lambda^{200}$ & $\lambda^{-20}$ & $\lambda^{88}$ & $\lambda^{90}$ & $\lambda^{-2}$
\end{tabular}$$
and the number of positive weights is 7.

At $\wil e_{1245}$ in $\wil U_{1245}$, the kernel of $\wil J_{1245}$ is generated by the following vectors
$$\begin{tabular}{c|c|c|c|c|c|c}
vector & $\partial_{0124}$ & $\partial_{0145}$ & $\partial_{1235}$ & $\partial_{1246}$ & $\partial_{1247}$ & $\partial_{1256}$\\\hline
weight & $\lambda^{112}$ & $\lambda^{20}$ & $\lambda^{-92}$ & $\lambda^{2}$ & $\lambda^{200}$ & $\lambda^{-200}$
\end{tabular}$$
$$\begin{tabular}{c|c|c|c|c|c|c}
vector & $\partial_{1257}$ & $\partial_{1457}$ & $\partial_{2345}$ & $\partial_{2456}$ & $\partial_{0125} + \partial_{1456}$ & $\partial_{1234} - \partial_{2457}$\\\hline
weight & $\lambda^{-2}$ & $\lambda^{108}$ & $\lambda^{20}$ & $\lambda^{-88}$ & $\lambda^{-90}$ & $\lambda^{110}$
\end{tabular}$$
and the number of positive weights is 7.

At $\wil e_{1267}$ in $\wil U_{1267}$, the kernel of $\wil J_{1267}$ is generated by the following vectors
$$\begin{tabular}{c|c|c|c|c|c|c}
vector & $\partial_{0126}$ & $\partial_{0167}$ & $\partial_{1237}$ & $\partial_{1246}$ & $\partial_{1247}$ & $\partial_{1256}$\\\hline
weight & $\lambda^{-88}$ & $\lambda^{20}$ & $\lambda^{108}$ & $\lambda^{2}$ & $\lambda^{200}$ & $\lambda^{-200}$
\end{tabular}$$
$$\begin{tabular}{c|c|c|c|c|c|c}
vector & $\partial_{1257}$ & $\partial_{1567}$ & $\partial_{2367}$ & $\partial_{2467}$ & $\partial_{0127} - \partial_{1467}$ & $\partial_{1236} + \partial_{2567}$\\\hline
weight & $\lambda^{-2}$ & $\lambda^{-92}$ & $\lambda^{20}$ & $\lambda^{112}$ & $\lambda^{110}$ & $\lambda^{-90}$
\end{tabular}$$
and the number of positive weights is 7.

At $\wil e_{0126}$ in $\wil U_{0126}$, the kernel of $\wil J_{0126}$ is generated by the following vectors
$$\begin{tabular}{c|c|c|c|c|c|c}
vector & $\partial_{0123}$ & $\partial_{0124}$ & $\partial_{0156}$ & $\partial_{0167}$ & $\partial_{0236}$ & $\partial_{0246}$\\\hline
weight & $\lambda^{108}$ & $\lambda^{200}$ & $\lambda^{-92}$ & $\lambda^{108}$ & $\lambda^{20}$ & $\lambda^{112}$
\end{tabular}$$
$$\begin{tabular}{c|c|c|c|c|c|c}
vector & $\partial_{0256}$ & $\partial_{1246}$ & $\partial_{1256}$ & $\partial_{1267}$ & $\partial_{0125} - \partial_{1236}$ & $\partial_{0146} + \partial_{0267}$\\\hline
weight & $\lambda^{-90}$ & $\lambda^{90}$ & $\lambda^{-112}$ & $\lambda^{88}$ & $\lambda^{-2}$ & $\lambda^{110}$
\end{tabular}$$
and the number of positive weights is 8.

At $\wil e_{0135}$ in $\wil U_{0135}$, the kernel of $\wil J_{0135}$ is generated by the following vectors
$$\begin{tabular}{c|c|c|c|c|c|c}
vector & $\partial_{0123}$ & $\partial_{0137}$ & $\partial_{0145}$ & $\partial_{0156}$ & $\partial_{0345}$ & $\partial_{0356}$\\\hline
weight & $\lambda^{92}$ & $\lambda^{200}$ & $\lambda^{92}$ & $\lambda^{-108}$ & $\lambda^{112}$ & $\lambda^{-88}$
\end{tabular}$$
$$\begin{tabular}{c|c|c|c|c|c|c}
vector & $\partial_{0357}$ & $\partial_{1235}$ & $\partial_{1356}$ & $\partial_{1357}$ & $\partial_{0136} + \partial_{0235}$ & $\partial_{0157} + \partial_{1345}$\\\hline
weight & $\lambda^{110}$ & $\lambda^{-20}$ & $\lambda^{-110}$ & $\lambda^{88}$ & $\lambda^{2}$ & $\lambda^{90}$
\end{tabular}$$
and the number of positive weights is 8.

At $\wil e_{1257}$ in $\wil U_{1257}$, the kernel of $\wil J_{1257}$ is generated by the following vectors
$$\begin{tabular}{c|c|c|c|c|c|c}
vector & $\partial_{1235}$ & $\partial_{1237}$ & $\partial_{1245}$ & $\partial_{1247}$ & $\partial_{1256}$ & $\partial_{1267}$\\\hline
weight & $\lambda^{-90}$ & $\lambda^{110}$ & $\lambda^{2}$ & $\lambda^{202}$ & $\lambda^{-198}$ & $\lambda^{2}$
\end{tabular}$$
$$\begin{tabular}{c|c|c|c|c|c|c}
vector & $\partial_{1357}$ & $\partial_{1457}$ & $\partial_{1567}$ & $\partial_{0125} + \partial_{2567}$ & $\partial_{0127} + \partial_{2457}$ & $\partial_{0157} - \partial_{2357}$\\\hline
weight & $\lambda^{18}$ & $\lambda^{110}$ & $\lambda^{-90}$ & $\lambda^{-88}$ & $\lambda^{112}$ & $\lambda^{20}$
\end{tabular}$$
and the number of positive weights is 8.

At $\wil e_{2456}$ in $\wil U_{2456}$, the kernel of $\wil J_{2456}$ is generated by the following vectors
$$\begin{tabular}{c|c|c|c|c|c|c}
vector & $\partial_{0246}$ & $\partial_{0256}$ & $\partial_{0456}$ & $\partial_{1245}$ & $\partial_{1246}$ & $\partial_{1256}$\\\hline
weight & $\lambda^{112}$ & $\lambda^{-90}$ & $\lambda^{20}$ & $\lambda^{88}$ & $\lambda^{90}$ & $\lambda^{-112}$
\end{tabular}$$
$$\begin{tabular}{c|c|c|c|c|c|c}
vector & $\partial_{2345}$ & $\partial_{2356}$ & $\partial_{2467}$ & $\partial_{4567}$ & $\partial_{0245} + \partial_{2346}$ & $\partial_{1456} - \partial_{2567}$\\\hline
weight & $\lambda^{108}$ & $\lambda^{-92}$ & $\lambda^{200}$ & $\lambda^{108}$ & $\lambda^{110}$ & $\lambda^{-2}$
\end{tabular}$$
and the number of positive weights is 8.

At $\wil e_{3567}$ in $\wil U_{3567}$, the kernel of $\wil J_{3567}$ is generated by the following vectors
$$\begin{tabular}{c|c|c|c|c|c|c}
vector & $\partial_{0356}$ & $\partial_{0357}$ & $\partial_{0367}$ & $\partial_{1356}$ & $\partial_{1357}$ & $\partial_{1567}$\\\hline
weight & $\lambda^{-88}$ & $\lambda^{110}$ & $\lambda^{112}$ & $\lambda^{-110}$ & $\lambda^{88}$ & $\lambda^{-20}$
\end{tabular}$$
$$\begin{tabular}{c|c|c|c|c|c|c}
vector & $\partial_{2356}$ & $\partial_{2367}$ & $\partial_{3457}$ & $\partial_{4567}$ & $\partial_{0567} + \partial_{3456}$ & $\partial_{1367} + \partial_{2357}$\\\hline
weight & $\lambda^{-108}$ & $\lambda^{92}$ & $\lambda^{200}$ & $\lambda^{92}$ & $\lambda^{2}$ & $\lambda^{90}$
\end{tabular}$$
and the number of positive weights is 8.

At $\wil e_{0256}$ in $\wil U_{0256}$, the kernel of $\wil J_{0256}$ is generated by the following vectors
$$\begin{tabular}{c|c|c|c|c|c|c}
vector & $\partial_{0126}$ & $\partial_{0156}$ & $\partial_{0236}$ & $\partial_{0246}$ & $\partial_{0356}$ & $\partial_{0456}$\\\hline
weight & $\lambda^{90}$ & $\lambda^{-2}$ & $\lambda^{110}$ & $\lambda^{202}$ & $\lambda^{18}$ & $\lambda^{110}$
\end{tabular}$$
$$\begin{tabular}{c|c|c|c|c|c|c}
vector & $\partial_{1256}$ & $\partial_{2356}$ & $\partial_{2456}$ & $\partial_{0125} + \partial_{2567}$ & $\partial_{0235} + \partial_{0567}$ & $\partial_{0245} - \partial_{0267}$\\\hline
weight & $\lambda^{-22}$ & $\lambda^{-2}$ & $\lambda^{90}$ & $\lambda^{88}$ & $\lambda^{108}$ & $\lambda^{200}$
\end{tabular}$$
and the number of positive weights is 9.

At $\wil e_{1235}$ in $\wil U_{1235}$, the kernel of $\wil J_{1235}$ is generated by the following vectors
$$\begin{tabular}{c|c|c|c|c|c|c}
vector & $\partial_{0123}$ & $\partial_{0135}$ & $\partial_{1237}$ & $\partial_{1245}$ & $\partial_{1256}$ & $\partial_{1257}$\\\hline
weight & $\lambda^{112}$ & $\lambda^{20}$ & $\lambda^{200}$ & $\lambda^{92}$ & $\lambda^{-108}$ & $\lambda^{90}$
\end{tabular}$$
$$\begin{tabular}{c|c|c|c|c|c|c}
vector & $\partial_{1356}$ & $\partial_{1357}$ & $\partial_{2345}$ & $\partial_{2356}$ & $\partial_{0125} - \partial_{1236}$ & $\partial_{1345} + \partial_{2357}$\\\hline
weight & $\lambda^{-90}$ & $\lambda^{108}$ & $\lambda^{112}$ & $\lambda^{-88}$ & $\lambda^{2}$ & $\lambda^{110}$
\end{tabular}$$
and the number of positive weights is 9.

At $\wil e_{1567}$ in $\wil U_{1567}$, the kernel of $\wil J_{1567}$ is generated by the following vectors
$$\begin{tabular}{c|c|c|c|c|c|c}
vector & $\partial_{0156}$ & $\partial_{0167}$ & $\partial_{1256}$ & $\partial_{1257}$ & $\partial_{1267}$ & $\partial_{1356}$\\\hline
weight & $\lambda^{-88}$ & $\lambda^{112}$ & $\lambda^{-108}$ & $\lambda^{90}$ & $\lambda^{92}$ & $\lambda^{-90}$
\end{tabular}$$
$$\begin{tabular}{c|c|c|c|c|c|c}
vector & $\partial_{1357}$ & $\partial_{1457}$ & $\partial_{3567}$ & $\partial_{4567}$ & $\partial_{0157} + \partial_{1367}$ & $\partial_{1456} - \partial_{2567}$\\\hline
weight & $\lambda^{108}$ & $\lambda^{200}$ & $\lambda^{20}$ & $\lambda^{112}$ & $\lambda^{110}$ & $\lambda^{2}$
\end{tabular}$$
and the number of positive weights is 9.

At $\wil e_{0156}$ in $\wil U_{0156}$, the kernel of $\wil J_{0156}$ is generated by the following vectors
$$\begin{tabular}{c|c|c|c|c|c|c}
vector & $\partial_{0126}$ & $\partial_{0135}$ & $\partial_{0145}$ & $\partial_{0167}$ & $\partial_{0256}$ & $\partial_{0356}$\\\hline
weight & $\lambda^{92}$ & $\lambda^{108}$ & $\lambda^{200}$ & $\lambda^{200}$ & $\lambda^{2}$ & $\lambda^{20}$
\end{tabular}$$
$$\begin{tabular}{c|c|c|c|c|c|c}
vector & $\partial_{0456}$ & $\partial_{1256}$ & $\partial_{1356}$ & $\partial_{1567}$ & $\partial_{0125} + \partial_{1456}$ & $\partial_{0136} - \partial_{0567}$\\\hline
weight & $\lambda^{112}$ & $\lambda^{-20}$ & $\lambda^{-2}$ & $\lambda^{88}$ & $\lambda^{90}$ & $\lambda^{110}$
\end{tabular}$$
and the number of positive weights is 10.

At $\wil e_{2356}$ in $\wil U_{2356}$, the kernel of $\wil J_{2356}$ is generated by the following vectors
$$\begin{tabular}{c|c|c|c|c|c|c}
vector & $\partial_{0236}$ & $\partial_{0256}$ & $\partial_{0356}$ & $\partial_{1235}$ & $\partial_{1256}$ & $\partial_{1356}$\\\hline
weight & $\lambda^{112}$ & $\lambda^{2}$ & $\lambda^{20}$ & $\lambda^{88}$ & $\lambda^{-20}$ & $\lambda^{-2}$
\end{tabular}$$
$$\begin{tabular}{c|c|c|c|c|c|c}
vector & $\partial_{2345}$ & $\partial_{2367}$ & $\partial_{2456}$ & $\partial_{3567}$ & $\partial_{0235} - \partial_{3456}$ & $\partial_{1236} + \partial_{2567}$\\\hline
weight & $\lambda^{200}$ & $\lambda^{200}$ & $\lambda^{92}$ & $\lambda^{108}$ & $\lambda^{110}$ & $\lambda^{90}$
\end{tabular}$$
and the number of positive weights is 10.

At $\wil e_{1356}$ in $\wil U_{1356}$, the kernel of $\wil J_{1356}$ is generated by the following vectors
$$\begin{tabular}{c|c|c|c|c|c|c}
vector & $\partial_{0135}$ & $\partial_{0156}$ & $\partial_{0356}$ & $\partial_{1235}$ & $\partial_{1256}$ & $\partial_{1357}$\\\hline
weight & $\lambda^{110}$ & $\lambda^{2}$ & $\lambda^{22}$ & $\lambda^{90}$ & $\lambda^{-18}$ & $\lambda^{198}$
\end{tabular}$$
$$\begin{tabular}{c|c|c|c|c|c|c}
vector & $\partial_{1567}$ & $\partial_{2356}$ & $\partial_{3567}$ & $\partial_{0136} + \partial_{3456}$ & $\partial_{1236} + \partial_{1456}$ & $\partial_{1345} - \partial_{1367}$\\\hline
weight & $\lambda^{90}$ & $\lambda^{2}$ & $\lambda^{110}$ & $\lambda^{112}$ & $\lambda^{92}$ & $\lambda^{200}$
\end{tabular}$$
and the number of positive weights is 11.

\nocite{AK16,AS08,AS10,AS08Cal,Kar06,CG83,Zho05,FH13,Kna13,Hum12,Bor12,Joy00}
\bibliographystyle{alpha}
\bibliography{yildirim}

\end{document}